\documentclass{article}
\usepackage{amsmath,amssymb,amsthm}
\usepackage{tikz}
\usepackage{color}
\usepackage[toc]{appendix}
\usepackage{graphicx}
\usepackage{fancyhdr}
\usepackage{enumitem}
\usepackage{bbm}
\usepackage{parskip}
\usepackage{float}
\usepackage{chngpage}
\usepackage{calc}
\usepackage{bigints}
\usepackage{array}
\usepackage{booktabs}
\usepackage{rotating}
\usepackage{multirow}
\usepackage{adjustbox}
\usepackage{tabularx}
\usepackage{verbatim}
\usepackage{mathtools}
\usepackage{ragged2e}
\usepackage[makeroom]{cancel}
\usepackage{caption}
\usepackage{hyperref}
\usepackage{caption}
\usepackage{subcaption}
\usepackage{appendix}
\usepackage{pgfplots}
\pgfplotsset{compat=1.16}
\usepackage[english]{babel}
\usepackage{hyphenat}
\usepackage[makeindex]{imakeidx}
\usepackage{amsmath}

\newcommand{\wh}{\widehat}
\newcommand{\ang}[1]{\langle #1\rangle}
\def\R{\mathbb{R}}
\def\C{\mathbb{C}}

\def\N{\mathbb{N}}

\def\E{\mathbb{E}}

\newcommand{\vertiii}[1]{{\left\vert\kern-0.25ex\left\vert\kern-0.25ex\left\vert #1 
    \right\vert\kern-0.25ex\right\vert\kern-0.25ex\right\vert}}

\usetikzlibrary{datavisualization}
\usetikzlibrary{matrix}
\usetikzlibrary{datavisualization.formats.functions}

\newtheorem{theorem}{Theorem}[section]
\newtheorem{proposition}[theorem]{Proposition}
\newtheorem{corollary}[theorem]{Corollary}
\newtheorem{lemma}[theorem]{Lemma}
\newtheorem{definition}[theorem]{Definition}

\newtheorem{remark}[theorem]{Remark}
\newtheorem*{notation*}{Notation}

\newcount\madechanges
\def\caution#1{\ifnum \madechanges=1 \affixmessage{#1}%
\else \relax \fi}
\def\affixmessage#1{\marginpar{{\footnotesize  \em #1} \openup
    -.3\baselineskip }}
\makeatletter
\def\section{\@startsection {section}{1}{\z@}{3.25ex plus 1ex minus
		.2ex}{1.5ex plus .2ex}{\large\bf}}
\def\subsection{\@startsection{subsection}{2}{\z@}{3.25ex plus 1ex minus
		.2ex}{1.5ex plus .2ex}{\normalsize\bf}}
\@addtoreset{equation}{section} 
\makeatother
\title{Regularization by noise for Gevrey well-posedeness of a weakly
  hyperbolic operator}

\author{ Enrico Bernardi \thanks{Dipartimento di Scienze Statistiche Paolo Fortunati, Università di Bologna, Bologna, Italy. \textbf{e-mail}: enrico.bernardi@unibo.it} \and
Alberto Lanconelli\thanks{Dipartimento di Scienze Statistiche Paolo
  Fortunati, Università di Bologna, Bologna, Italy. \textbf{e-mail}:
  alberto.lanconelli2@unibo.it}
}
\date{\today}
\begin{document}
\maketitle
\bigskip
\begin{abstract}
We present an example of a linear partial differential equation whose Cauchy problem becomes well-posed when perturbed by noise. Specifically, we make clear how  a suitable multiplicative Stratonovich perturbation of Brownian type renders a weakly hyperbolic operator with double involutive characteristics well-posed in the $C^{\infty}$-category, while its deterministic counterpart is only well-posed in the Gevrey $ s $ classes with $ 1 \leq s <2 $ . 
\end{abstract}

Key words and phrases: regularization by noise, weakly hyperbolic operator, It\^o-Stratonovich integration. \\

AMS 2020 classification: 60H15, 35L10, 60H50.
\allowdisplaybreaks
	
\section{Introduction and statement of the main result}	

It is  well-known that the Cauchy problem
\begin{align}
  \label{eq:CP}
  \begin{cases}
    \mathrm{P}(t,x,\partial_{t},\partial_{x})u(t,x)=0,& (t,x)\in[0,T]\times \R^{n};\\
    \partial_{t}^{j}u(0,x)=u_{j}(x),& x\in\R^{n},j=0, \ldots, m-1,
  \end{cases}
\end{align}
where $\mathrm{P}$ is a linear hyperbolic differential operator of order $ m\in\mathbb{N}
$ with analytic coefficients, may fail to be
well-posed in the $C^{\infty}$-category if $\mathrm{P}$ is weakly
hyperbolic, that is if some of its characteristic roots coincide. When
this happens, and $ r\in\mathbb{N}$ denotes the largest multiplicity of these roots,
then a classical result in \cite{BroGevrey} shows that for arbitrary
lower order terms of the operator $\mathrm{P}$, one can in general only
expect well-posedness in the Gevrey classes $\gamma^{(s)}$, with $ 1
\leq s < \frac{r}{r-1} $. Moreover, the threshold $ \frac{r}{r-1} $ is usually optimal.
We recall that  $f\in\gamma^{(s)}\left(\R^{n}\right)$, the Gevrey class of order $s\geq 1$, if for any compact set $K \subset \R^{n}$ there exist constants $C>0$ and $h>0$ such that
\begin{align*}
\left|\partial_{x}^{\alpha} f(x)\right| \leq C h^{-|\alpha|}(\alpha!)^{s}\quad\mbox{ for all }x \in K\mbox{ and }\alpha \in \N^{n}.
\end{align*}
We also denote $ \gamma^{(s)}_{0}(\R^n) = \gamma^{(s)}(\R^n) \cap C_{0}^{\infty}(\R^n)$.

In this paper we study an explicit example of a weakly hyperbolic operator with a manifold of involutive double characteristics $ \Sigma_{2}= \{t \in [0,T], x \in \R
: (t,x,\tau=0;\xi\neq 0)\} $ and  $m=r=2$, i.e. $T_{\rho}\Sigma_{2}^{\sigma} \subset T_{\rho}\Sigma_{2} $ the
symplectic dual of the tangent space at $ \rho \in \Sigma_{2} $ is
contained in $ T_{\rho}\Sigma_{2}$. The operator reads as
\begin{align}
  \label{eq:MOD}
  \begin{cases}
  (\partial_{t}^{2} +
    i\partial_{x})u(t,x)=0, & t\in [0,T], x\in\mathbb{R};\\
    u(0,x)=\varphi_0(x), \partial_tu(0,x) = \varphi_1(x), & x\in\mathbb{R}.
    \end{cases}
\end{align}
Albeit (\ref{eq:MOD}) is admittedly very simple, its principal part  is nonetheless one of the only three possible cases in which a hyperbolic quadratic form can be invariantly
represented in suitable symplectic coordinates. Thus, (\ref{eq:MOD}) is actually the microlocal model for \emph{any} hyperbolic operator with double involutive characteristics. Many physical phenomena are driven by equations of the type (\ref{eq:MOD}): conical refraction of light rays through prisms, see e.g \cite{MR544247} and \cite{MR637495}, dynamics of wet-dry regimes in linearized Saint-Venant shallow
waters systems see e.g. \cite{MR1339932} and \cite{MR2761079}, to name just a few.\\
To be more specific, localizing the principal symbol at a double
point, we end up with a symplectic quadratic form  of one of  these three
invariant cases, as proved in \cite{MR781536} and \cite{MR1178557}:
\begin{align}
  \label{eq:QFS}
    &\bullet\!\!\!\!\quad p(t,\tau,x;\xi)= 2\lambda t \tau , ~\lambda >0;\nonumber\\
    &\bullet\!\!\!\!\quad p(t,\tau,x;\xi)= -\tau^{2};\\
    &\bullet\!\!\!\!\quad p(t,\tau,x;\xi)= -\tau^{2} +2\tau\xi_{1} + x_{1}^{2}\nonumber.
\end{align}
The first case in (\ref{eq:QFS}) corresponds to the so called
effectively hyperbolic case, while the third one is the only invariant
case where its Hamiltonian matrix has a Jordan block of size $ 4
$. Stochastic generalizations of the first item in (\ref{eq:QFS}) were
recently studied in \cite{TIN}, while some results for associated stochastic partial differential equations in this last case where also obtained in \cite{SDC}. For the analysis of other weakly hyperbolic partial differential equations perturbed by noise we refer the reader to \cite{MR4087470} and \cite{MR4219195}. \\
It is immediate to check that the lower order term in (\ref{eq:MOD}) does not satisfy
the necessary Ivrii-Petkov-H\"ormander condition, which for this model
states that the sub-principal symbol of $\mathrm{P}$ must vanish on $\Sigma_{2}$, see \cite{MR492751} and \cite{MR427843}. This implies that the Gevrey threshold $ s=2 $ is
the natural barrier for equation (\ref{eq:MOD}).

Aim of the present manuscript is to understand the effect of noise on this particular setup. Is it possible to gain in-mean well-posedness for a suitable stochastic perturbation of (\ref{eq:MOD})? The answer presented and proved here is affirmative,
positioning this outcome among the already vast array of results capped under the general
title of regularization by noise.\\
Dating back to results of  Khasminskii \cite{KHAS}, where examples of unstable systems of
ordinary differential equations become asymptotically stable when their parameters are
perturbed by white noise, passing through the works of Zvonkin \cite{Zvonkin} and Veretennikov \cite{Veretennikov}, that prove existence of a unique strong solution for stochastic differential equation with merely bounded and measurable drift, and reaching for instance to the theory of transport-type stochastic perturbations and the foundational work of Flandoli–Gubinelli–Priola \cite{FGP},
where we can see that, in those setting, noise restores uniqueness for
transport equations below deterministic thresholds, we do know that there are many
instances of noise--regularized behaviors. 
See also the survey \cite{10.1007/978-3-319-74929-7_3} and lecture notes \cite{Flandoli2011} for a broader overview of the phenomenon.\\
The general philosophy of regularization by noise is that stochastic
perturbations may  introduce effective coercivity or averaging mechanisms that are absent in the deterministic problem.
In many examples, this becomes visible after converting the equation
to It\^o form and  exploiting quadratic variation
\cite{FGP}. Something along these lines is happening here as well.\\
On a slightly different angle but in a similar vein too, we may also cite some recent results \cite{lü2026uniquecontinuationpropertystochastic} on
the unique continuation property for stochastic wave equations, which  seem to
present a similar scheme: stochastic Carleman estimates restore
unicity properties near characteristic surfaces, which are known to
usually fail in the deterministic case. 

We may now state our main result whose proof is postponed to Section \ref{SEE} below. We first rewrite (\ref{eq:MOD}) as
\begin{align}
\label{eq:STRA 1}
\begin{cases}
\partial_tu(t,x) = v(t,x),& t\in(0,T], x\in\mathbb{R};\\
\partial_tv(t,x) = -i\partial_xu(t,x), & t\in(0,T], x\in\mathbb{R};\\
u(0,x)=\varphi_0(x),v(0,x)=\varphi_1(x),& x\in\mathbb{R};
\end{cases}
\end{align}
then, we consider the stochastic partial differential equation
\begin{align}
  \label{eq:STRA}
  \begin{cases}
    dU(t,x) = V(t,x) dt+ \sigma \partial_xU(t,x)\circ dB(t),& t\in(0,T], x\in\mathbb{R}\\
    dV(t,x) = -i\partial_x U(t,x)dt & t\in(0,T], x\in\mathbb{R}\\
    U(0,x)=\varphi_0(x),V(0,x)=\varphi_1(x),& x\in\mathbb{R},
  \end{cases}
\end{align}
where $ \sigma$ is a positive real number and $\{B(t)\}_{t\geq 0} $ is a standard one dimensional Brownian motion. As usual by  $ \circ $ we  denote
Stratonovich-type integration, see for instance \cite{MR1214374}. Equation \eqref{eq:STRA} represents a stochastic perturbation of (\ref{eq:STRA 1}). The next theorem states the better well-posedness of (\ref{eq:STRA}) over (\ref{eq:STRA 1}); in the sequel we set $H_{x}^{\infty}:=
\bigcap_{k\ge0}H_{x}^{k}$ and $H_{x}^{k}$ denoting the usual Sobolev space
in the $x$ variable.
\begin{theorem}
  \label{MTH}
The Cauchy problem for the stochastic Stratonovich perturbation (\ref{eq:STRA}) is well-posed in $H_x^\infty$ in the continuous 
sense, i.e. it has a unique solution process in
\begin{align*}
U\in \bigcap_{k\ge0}C\big([0,T];L^2(\Omega;H_x^k)\big) ,
\qquad
V\in \bigcap_{k\ge0} C\big([0,T];L^2(\Omega;H_x^{k-\frac12})\big)~.
\end{align*}
\end{theorem}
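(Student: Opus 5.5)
The plan is to pass to the Fourier transform in $x$, which reduces \eqref{eq:STRA} to a family, indexed by $\xi\in\R$, of linear $2\times 2$ stochastic ODEs, and then convert to It\^o form. Writing $\hat U(t,\xi),\hat V(t,\xi)$, the system reads
\begin{align*}
d\hat U = \hat V\,dt + i\sigma\xi\,\hat U\circ dB(t),\qquad d\hat V = \xi\,\hat U\,dt .
\end{align*}
The Stratonovich correction for the first equation is $\tfrac12 (i\sigma\xi)^2\hat U\,dt=-\tfrac{\sigma^2\xi^2}{2}\hat U\,dt$. The It\^o form is therefore
\begin{align*}
d\hat U = \Big(\hat V-\frac{\sigma^2\xi^2}{2}\hat U\Big)dt + i\sigma\xi\,\hat U\,dB(t),\qquad d\hat V=\xi\hat U\,dt .
\end{align*}
This damping term, quadratic in $\xi$, is the regularizing mechanism. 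It dominates the deterministic instability, which for $\partial_t^2+i\partial_x$ produces growth of order $e^{\sqrt{|\xi|}\,t}$ and is exactly the source of the Gevrey-$2$ barrier. For each fixed $\xi$ the linear SDE has a unique strong solution, which gives existence, uniqueness and adaptedness pointwise in $\xi$. Joint measurability in $(\xi,\omega,t)$ follows from the explicit linear structure.

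The core step is a mean-square energy estimate uniform in $\xi$, with the weights dictated by the statement: $U$ in $H^k$ and $V$ in $H^{k-1/2}$. I would set
\begin{align*}
E(t,\xi)=\E\Big[|\xi|\,|\hat U|^2+|\hat V|^2\Big],
\end{align*}
possibly with $\langle\xi\rangle$ replacing $|\xi|$, and compute $dE/dt$ by It\^o's formula. The It\^o correction from the noise in $|\hat U|^2$ is $+\sigma^2\xi^2|\hat U|^2$, while the drift contributes $-\sigma^2\xi^2|\hat U|^2$. These cancel exactly, because multiplication by $e^{i\sigma\xi B}$ is unitary, so the damping does not appear naively in $E|\hat U|^2$. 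The cross terms give
\begin{align*}
2|\xi|\,\mathrm{Re}\,\E[\hat V\bar{\hat U}]+2\xi\,\mathrm{Re}\,\E[\hat U\bar{\hat V}],
\end{align*}
which by Cauchy--Schwarz are bounded by $C|\xi|^{1/2}E$. A naive bound therefore reproduces only the Gevrey growth $e^{C\sqrt{|\xi|}t}$. This is the main obstacle: I must show that $\E[\hat V\bar{\hat U}]$ is itself damped.

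To handle it, I would work with the closed linear ODE satisfied by the second moments $a=\E|\hat U|^2$, $b=\E[\hat U\bar{\hat V}]$, $c=\E|\hat V|^2$. Since $\hat V$ has no noise term, $d(\hat U\bar{\hat V})$ carries the drift $-\tfrac{\sigma^2\xi^2}{2}\hat U\bar{\hat V}$ with no compensating It\^o correction. So $b$ is damped at rate $\sigma^2\xi^2/2$, while $a$ and $c$ are driven only through $b$. The system is roughly
\begin{align*}
a'=2\,\mathrm{Re}\,b,\qquad c'=2\xi\,\mathrm{Re}\,b,\qquad b'=-\tfrac{\sigma^2\xi^2}{2}b+c+\xi a .
\end{align*}
This is a $3\times3$ (or $4\times4$ real) constant-coefficient system for each $\xi$. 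I would compute its eigenvalues asymptotically for large $|\xi|$: one eigenvalue is approximately $-\sigma^2\xi^2/2$, and the remaining two are $O(1/|\xi|)\cdot O(|\xi|)=O(1)$ after adiabatic elimination of $b$. Concretely, $b\approx 2(c+\xi a)/(\sigma^2\xi^2)$ gives growth rates bounded uniformly in $\xi$. I would make this rigorous either by an explicit Lyapunov functional $E+\epsilon\,\mathrm{Re}(\ldots)$ tuned to $\xi$, or by diagonalization with uniformly bounded conditioning. The target bound is $a|\xi|+c\le C e^{CT}\big(|\xi||\hat\varphi_0|^2+|\hat\varphi_1|^2\big)$, uniformly in $|\xi|\ge1$; bounded $\xi$ are trivial.

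Multiplying this bound by $\langle\xi\rangle^{2k-1}$ and integrating in $\xi$ gives $\E\|U(t)\|_{H^k}^2+\E\|V(t)\|_{H^{k-1/2}}^2\le C_T(\|\varphi_0\|_{H^k}^2+\|\varphi_1\|_{H^{k-1/2}}^2)$ for every $k$. Continuity in $t$ with values in $L^2(\Omega;H^k)$ follows from dominated convergence applied to the mean-square continuity of each Fourier mode, the domination coming from the uniform bound. Inverting the Fourier transform then shows that $(U,V)$ solves \eqref{eq:STRA} in the mild/weak sense, since Fourier transform and stochastic integration commute by stochastic Fubini. Uniqueness follows because any solution in the stated class has Fourier modes solving the pointwise SDE, and that SDE has a unique solution.
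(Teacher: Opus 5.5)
Your proposal is correct and follows essentially the same route as the paper: Fourier transform and It\^o conversion, then the closed linear system for the second moments $\E|\wh U|^2$, $\E\,\Re(\overline{\wh U}\wh V)$, $\E|\wh V|^2$, whose eigenvalues are $0$, $\lambda_+=O(1/|\xi|)$ and $\lambda_-\approx-\sigma^2\xi^2/2$; a weighted energy equivalent to the paper's $F=m_1+\langle\xi\rangle^{-1}m_3$; integration against $\langle\xi\rangle^{2k}$; and dominated convergence for time continuity. The step you leave as a plan, the uniformly conditioned diagonalization, is what the paper carries out explicitly: it uses the eigenbasis $v_0,v_\pm$, the coefficient bounds $1/|\Delta|\lesssim\xi^{-2}$, $|\lambda_-/\Delta|\lesssim 1$, $|\lambda_+/\Delta|\lesssim|\xi|^{-3}$, the Cauchy--Schwarz bound $|m_{20}|\le(m_{10}+m_{30})/2$ on the initial cross moment, and a compactness argument for $|\xi|\le\xi_0$.
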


\begin{remark}
This result can be compared with the known result, a proof of which is sketched  in the next section, which states that for \(s>2\) the Cauchy problem for \eqref{eq:STRA 1} is not locally solvable in \(\gamma^{(s)}\); in particular, the Cauchy problem for \eqref{eq:STRA 1} is not \(C^{\infty}\)-solvable.
\end{remark}

The paper is organized as follows: in Section \ref{DR} we review the
known facts that fix the Gevrey index $ s=2 $ as the natural threshold
for the deterministic model \eqref{eq:STRA 1}. 
In Section \ref{SEE} we prove the main result, which is direct
consequence of Theorem \ref{thm:Global} below and of the
stochastic energy estimates of Lemma \ref{lem:sobolevMoment2}.
\section{A brief review of the deterministic case}
\label{DR}
The next two subsections present  simple self-contained arguments showing how
the threshold Gevrey $ s=2 $ appears naturally for this
model. Needless to say every result given here is well-known and
belongs to a historically  well-established theoretical framework. An exhaustive and
complete collection of results can be found for instance in
\cite{NTLEC}, source of some of the theorems adapted and used below. 

\subsection{Sufficiency}\label{sec:suff}

We present a very short and sketchy review of how to get Gevrey energy estimates
for the model (\ref{eq:MOD}). We consider here, with Fourier
derivatives $ D_{y}:= \frac{1}{i}\partial_{y} $, the slightly more
general symbol
\begin{align*}
  \mathrm{P}(x,D) = -D_{t}^{2} + AD_{x}
\end{align*}
with $ A\in \C $. Taking the Fourier transform in $x$ and setting 
\begin{align*}
\wh u(\xi):= \int_{\R}e^{-ix\xi}u(x)dx,\quad \xi\in\mathbb{R}, 
\end{align*}
we get
\begin{align*}
  2i\Im \langle \mathrm{P}\wh u,-D_{t} \wh u \rangle = D_{t}|D_{t}\wh u|^{2} + 2i
  \Im \xi \langle A \wh u, D_{t}\wh u \rangle
\end{align*}
and
\begin{align}
  \label{eq:DER2}
  2i\Im \langle D_{t}\wh u, \wh u \rangle = D_{t}|\wh u|^{2}.
\end{align}
With the proviso, not affecting the gist of the argument,  that $ \xi $ is positive and large and with $ \lambda >0 $ to be suitably chosen, define the Gevrey norm
\begin{align*}
  \vertiii{v}_{s}^{2}(\xi):= \int_{0}^{\infty}e^{2\lambda t \xi^{1/s}}|v(t;\xi)|^{2}dt,\quad \xi\in\mathbb{R},
\end{align*}
and inner product
\begin{align*}
	\displaystyle \langle f, g
	\rangle_{s}(\xi) = \int_{0}^{\infty}e^{2\lambda t
		\xi^{1/s}}f(t;\xi)\bar{g}(t;\xi)dt ,\quad \xi\in\mathbb{R}.
\end{align*}
Then, multiplying the two previous identities by $ \displaystyle e^{2\lambda t
  \xi^{1/s}} $ and integrating by parts we get 
\begin{align}
  \label{eq:G2EST}
  2\Im\langle P \wh u ,-D_{t} \wh u \rangle(\xi) \geq 2 \lambda
  \xi^{1/s}\vertiii{D_{t}\wh u}_{s}^{2}(\xi) + 2
  \Im \xi \langle A \wh u, D_{t}\wh u \rangle(\xi).
\end{align}
Since we also have from (\ref{eq:DER2}) that
\begin{align*}
  \vertiii{D_{t}\wh u}_{s}^{2}(\xi) \geq \lambda^{2}\xi^{2/s}\vertiii{\wh u}_{s}^{2}(\xi),
\end{align*}
using Cauchy-Schwarz inequality and  
\begin{align*}
\displaystyle 2 -
\frac{1}{s} < \frac{3}{s} 
\end{align*}
we see that we can estimate the last
summand in (\ref{eq:G2EST}) with a choice of $ \lambda $ and therefore, after an inverse Fourier
transform,  we have reached a Gevrey $ s $
with $ s <2  $ energy estimate. Standard arguments allow us then to
conclude.

\begin{remark}
For the sake of completeness we can also remark that the
Cauchy problem for (\ref{eq:MOD}) fails when $ s=2 $, albeit in a
slightly weaker sense. Since the main focus here is however on the lack of
solvability for $ s>2 $, we point the interested reader to some
classical literature for these cases: \cite{NTLEC}.
\end{remark}

\subsection{Sharpness}\label{sec:sharp}

Now we want to verify that if $ s>2 $ the Cauchy problem for
(\ref{eq:MOD}) is ill-posed. This is done in Theorem \ref{ill1} below.
We follow the arguments of \cite{NTLEC} Section 6.3, starting with the following important definition.

\begin{definition}
  We say that the Cauchy problem for the second order differential
  operator $\mathrm{P}$ is locally solvable in \(\gamma^{(s)}\) at the origin
  if for any \(\Phi=\left(\varphi_0, \varphi_1\right) \in
  \left(\gamma^{(s)}\left(\R^{n}\right)\right)^{2}\), there
  exists  a neighborhood \(U_{\Phi}\) of the origin such that there exists $u \in C^{\infty}\left(U_{\Phi}\right)$ satisfying
\begin{align*}
\begin{cases}
\mathrm{P}u=0,& \mbox{ in }U_{\Phi}; \\
D_{0}^{j} u\left(0, x^{\prime}\right)=u_{j}\left(x^{\prime}\right), j=0,1, & \left(0, x^{\prime}\right) \in U_{\Phi} \cap\left\{x_{0}=0\right\}.
\end{cases}
\end{align*}
\end{definition}

Then our main result in this subsection reads as follows.

\begin{theorem}
  \label{ill1}
  If \(s>2\) then the Cauchy problem (\ref{eq:MOD}) is not
  locally solvable in \(\gamma^{(s)}\). In particular the Cauchy
  problem (\ref{eq:MOD}) is not \(C^{\infty}\) solvable.
\end{theorem}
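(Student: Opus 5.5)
We follow the standard scheme of \cite{NTLEC}, Section 6.3. We argue by contradiction through a Baire category argument, and then test the resulting a priori estimate against explicit exponentially growing solutions.

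\textbf{Step 1: an a priori estimate.} Assume the Cauchy problem for (\ref{eq:MOD}) is locally solvable in $\gamma^{(s)}$ with $s>2$. Fix $h>0$ and consider the Banach space $\gamma^{(s)}_h$ of pairs $\Phi=(\varphi_0,\varphi_1)$ on a fixed compact neighbourhood $K$ of $0$, normed by $\sup_{\alpha,x\in K}|\partial^\alpha\varphi_j(x)|h^{|\alpha|}(\alpha!)^{-s}$. For $n\in\N$ let $E_n$ be the set of $\Phi$ for which a solution $u$ exists on the ball $B_{1/n}$ with $\sum_{|\beta|\le 2}\sup_{B_{1/n}}|\partial^\beta u|\le n$. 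Solutions of (\ref{eq:MOD}) with given data are unique, for instance by Holmgren's theorem since the coefficients are analytic and $t=0$ is noncharacteristic. Using this, each $E_n$ is closed, and local solvability gives $\bigcup_n E_n=\gamma^{(s)}_h$. By Baire's theorem some $E_n$ has an interior point, and linearity yields the following estimate: there exist $\delta,C>0$ such that for all data in $\gamma^{(s)}_h$,
\[
\sup_{|t|+|x|\le\delta}|u(t,x)|\le C\,\|\Phi\|_{\gamma^{(s)}_h}.
\]
Localization is handled by multiplying data by a fixed $\gamma^{(s)}_0$ cutoff, which exists since $s>1$.

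\textbf{Step 2: explicit growing solutions.} We take the ansatz $u=e^{ix\xi}e^{t\mu}$. Substituting into (\ref{eq:MOD}) gives $\mu^2-\xi=0$, i.e. $\mu=\pm\sqrt{\xi}$. For $\xi>0$ large the choice $\mu=\sqrt{\xi}$ produces $u_\xi(t,x)=e^{ix\xi+t\sqrt\xi}$, which grows like $e^{t|\xi|^{1/2}}$. The data are $\varphi_0=e^{ix\xi}$ and $\varphi_1=\sqrt\xi\,e^{ix\xi}$, multiplied by the cutoff $\chi\in\gamma^{(s')}_0$ with $2<s'<s$. The cutoff destroys exactness, so instead we take the data to be the cutoff pair and compare with $u_\xi$ on the dependence region. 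Alternatively, and more cleanly, we apply the estimate of Step 1 to $\chi u_\xi$ and absorb the error term. A simpler route is to note that the estimate extends by a translation argument to data whose Gevrey norms are controlled. The Gevrey norm of $\chi e^{ix\xi}$ is bounded by $C\sup_k \xi^k h^k (k!)^{-s}\approx C e^{c h^{1/s}\xi^{1/s}}$.

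\textbf{Step 3: contradiction.} Evaluating at $t=\delta$, $x=0$, the estimate reads
\[
e^{\delta\xi^{1/2}}\le C'\sqrt\xi\, e^{c\xi^{1/s}}.
\]
Since $1/s<1/2$, this fails as $\xi\to\infty$. This proves non-solvability in $\gamma^{(s)}$, and hence also in $C^\infty\supset\gamma^{(s)}$.

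\textbf{Main obstacle.} The delicate step is justifying the use of exact exponentials, which are not compactly supported, against an estimate derived for localized data. I would first try uniqueness plus finite propagation speed in $t$ near the origin, which for $m=2$ holds via Holmgren or an energy argument in $x$. If that route fails, I would fall back on the Lax–Mizohata style construction in \cite{NTLEC}: build approximate solutions $\chi(x)u_\xi$ and bound the error $\mathrm{P}(\chi u_\xi)$, which is of size $e^{t\sqrt\xi}$ only on $\operatorname{supp}\chi'$, away from the origin.
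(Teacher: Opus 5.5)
Your argument is correct, but it takes a different route from the paper's.

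\textbf{The paper's route.} The paper uses the Baire step only qualitatively. Proposition~\ref{pro1} gives a uniform existence domain $D_\delta$ for data in $\gamma^{(s),h}(K)$. It then proves a nonexistence result by duality (Proposition~\ref{pro2}). Holmgren makes a hypothetical solution with data $(0,\theta)$ compactly supported in $x$. That solution is integrated by parts against an exponentially growing solution $U_\rho$ of the formally self-adjoint operator on a slab $[0,T]\times\R$. This forces $|\hat\theta(\eta)|\le Ce^{-c|\eta|^{1/2}}$, i.e.\ $\theta\in\gamma^{(2)}$. Choosing an even $\theta\in\gamma^{(s')}_0\setminus\gamma^{(2)}$ with $2<s'<s$ gives the contradiction.

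\textbf{Your route.} You extract the quantitative form of the Baire lemma, namely boundedness of the data-to-solution map from $\gamma^{(s)}_h$ into $C^0(B_\delta)$. You then test it directly on the exact solutions $e^{ix\xi+t\sqrt\xi}$, comparing the growth $e^{t_1\xi^{1/2}}$ with the Gevrey norm $e^{c\xi^{1/s}}$ of the modulated data.

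\textbf{What each buys.} The paper needs only existence on a uniform domain. It also yields a concrete sharp statement: data $(0,\theta)$ are solvable only if $\theta\in\gamma^{(2)}$. The price is the duality identity, the Fourier characterization of $\gamma^{(2)}$, and the construction of a suitable even $\theta$. Your route is more elementary and makes the exponent comparison $1/s<1/2$ transparent. It relies on the a priori estimate, and on one step you left undecided. Both arguments use Holmgren to deal with the fact that the exponentials are not compactly supported. The paper compactifies the solution $u$; you localize the data and must then identify $u$ with the exponential near the origin.

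\textbf{The step to settle.} Settle it with the first option you list and drop the others.
\begin{itemize}
\item The Step~1 estimate only concerns solutions of the homogeneous Cauchy problem. It cannot be applied to $\chi u_\xi$, since $\mathrm{P}(\chi u_\xi)=i\chi' u_\xi\neq 0$ and no inhomogeneous solvability is assumed. The same objection applies to the Lax--Mizohata fallback without an added duality argument. The translation remark does not replace localization.
\item Instead, take data $\chi\cdot(e^{ix\xi},\sqrt\xi\, e^{ix\xi})$ with $\chi=1$ on $|x|\le r$. Every surface $t=\phi(x)$ is noncharacteristic for $\partial_t^2+i\partial_x$. So Holmgren's theorem applied to $u-u_\xi$, with the surfaces $t=\epsilon(r^2-x^2)$, $0\le\epsilon\le\epsilon_0$, gives $u=u_\xi$ on a lens. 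Here $r$ and $\epsilon_0$ are chosen small in terms of $\delta$ so the surfaces stay in $B_\delta$.
\item That lens depends on $r$ and $\delta$ but not on $\xi$. Evaluate at a point $(t_1,0)$ of the lens rather than at $t=\delta$.
\end{itemize}

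Finally, the restriction $s'>2$ on the cutoff is unnecessary in your argument. Any $1<s'<s$ gives $\|\chi e^{ix\xi}\|_{\gamma^{(s)}_h}\le C_h e^{c\xi^{1/s}}$ once the Leibniz terms are bounded using $\binom{k}{j}(k!)^{-s}\le (j!)^{-s}((k-j)!)^{-s}$.
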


To prove Theorem (\ref{ill1}) we recall the following result (see
e.g. \cite{NTLEC}, \cite{MR97628} or \cite{MR599580} Proposition 4.1).

\begin{proposition}
	\label{pro1}
	Let \(K\) be a compact set of \(\R^{n}\) and \(h>0\) be
	fixed. Assume that the Cauchy problem for \(\mathrm{P}\) is locally solvable
	in \(\gamma^{(s)}\) at the origin. Then there is \(\delta>0\) such
	that for any \(\left(\varphi_0\left(x^{\prime}\right),
	\varphi_1\left(x^{\prime}\right)\right) \in\left(\gamma^{(s),
		h}(K)\right)^{2}\) there exists a unique \(u(x) \in
	C^{2}\left(D_{\delta}\right)\)  verifying \(Pu=0\) in \(D_{\delta}\) and \(D_{0}^{j} u\left(0, x^{\prime}\right)=u_{j}\left(x^{\prime}\right)\) on \(D_{\delta} \cap\left\{x_{0}=0\right\}\).
\end{proposition}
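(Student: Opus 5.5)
This is the classical result quoted from the literature (Nishitani, Ivrii, Mizohata); the plan is the standard closed-graph/Baire argument, which turns local solvability into a uniform, nonlinear-in-nothing statement on a fixed Banach space of data.

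\textbf{Step 1: the data space is a Baire space.} Fix $K$ and $h$. Let $\gamma^{(s),h}(K)$ be the functions supported in $K$ with
\[
\sup_{\alpha,x}\frac{|\partial^\alpha\varphi(x)|\,h^{|\alpha|}}{(\alpha!)^s}<\infty .
\]
With this norm it is a Banach space, since completeness follows from uniform convergence of all derivatives. Hence $X=(\gamma^{(s),h}(K))^2$ is a Baire space.

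\textbf{Step 2: uniqueness.} The surface $\{x_0=0\}$ is non-characteristic for $\mathrm P$, since the coefficient of $D_0^2$ is nonzero. Holmgren's theorem therefore gives uniqueness of $C^2$ solutions in a lens-shaped neighbourhood of the origin. Hence wherever a local solution exists it is unique, and the solutions obtained on different domains patch together.

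\textbf{Step 3: Baire category.} For $j\in\mathbb N$ let $F_j$ be the set of $\Phi\in X$ for which there is a solution $u\in C^2(D_{1/j})$ with $\|u\|_{C^2(D_{1/j})}\le j$. Local solvability gives a neighbourhood $U_\Phi$ containing some $D_{1/j}$, and the solution is $C^\infty$ there, so its $C^2$ norm on a slightly smaller closed lens is finite. Therefore $X=\bigcup_j F_j$.

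Each $F_j$ is closed. Take $\Phi_k\to\Phi$ with solutions $u_k$. By Arzelà--Ascoli a subsequence converges in $C^1$, and the limit satisfies $\mathrm Pu=0$ weakly with the right data. To recover $C^2$ one can define $F_j$ with $C^3$ bounds instead, so that the limit is genuinely $C^2$; uniqueness then identifies it.

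By Baire, some $F_j$ has interior, containing a ball $B(\Phi_0,r)$.

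\textbf{Step 4: from the ball to all of $X$.} By linearity and the uniqueness of Step 2, differences of solutions solve the problem for differences of data. So every $\Phi$ with $\|\Phi\|\le r$ has a solution on $D_{1/j}$ with $C^2$ norm at most $2j$. Scaling then handles every $\Phi\in X$, with $\delta=1/j$. Uniqueness comes from Step 2.

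\textbf{Main obstacle.} The delicate step is the closedness of $F_j$ together with the regularity bookkeeping. We need a solution in $C^2(D_\delta)$, but the compactness argument loses derivatives, so the bound must be imposed in a stronger norm than the regularity we conclude. Working with closed lenses $\overline{D_\delta}$ makes the norms finite, and shrinking $\delta$ slightly at the end yields the open domain $D_\delta$ in the statement.
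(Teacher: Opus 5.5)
The paper does not prove this proposition. It quotes it from Nishitani's notes, Mizohata and Ivrii, where it is obtained by the Baire-category argument you set up. So your overall plan (Banach space of data, Holmgren uniqueness, Baire, then linearity and scaling) is the right one. However, the step you flag as the main obstacle is not resolved as written.

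Define $F_j$ by a $C^3$ bound on $\overline{D_{1/j}}$ and take $\Phi_k\to\Phi$ with $\Phi_k\in F_j$. Arzel\`a--Ascoli gives a subsequence $u_k\to u$ in $C^2(\overline{D_{1/j}})$, and $u$ solves $\mathrm{P}u=0$ with data $\Phi$ and $\|u\|_{C^2}\le j$. But $u$ is only known to be $C^{2,1}$, so nothing shows $\|u\|_{C^3}\le j$, i.e.\ that $\Phi\in F_j$. Uniqueness does not rescue this. Holmgren identifies $u$ with the $C^\infty$ solution from local solvability only on $U_\Phi\cap D_{1/j}$, and $U_\Phi$ may be much smaller than $D_{1/j}$. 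Nor can you expect a regularity gain from the equation: $\partial_t^2+i\partial_x$ is a Schr\"odinger operator with $x$ playing the role of time, and it is not hypoelliptic. So the claim ``each $F_j$ is closed'' is unproved.

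The fix is cheap, because closedness is not actually needed. Baire's theorem only says that some $F_j$ fails to be nowhere dense, i.e.\ $\overline{F_j}\supset B(\Phi_0,r)$ for some $j$. Your compactness argument (with $C^3$ bounds in the definition of $F_j$) shows that every $\Phi\in\overline{F_j}$ still admits a solution in $C^2(\overline{D_{1/j}})$ with $C^2$ norm at most $j$, and that is all Step 4 uses. Alternatively, you can use either of these:
\begin{itemize}
\item Define $F_j$ through a lower semicontinuous norm such as the $C^{2,1}$ norm (second derivatives bounded, with Lipschitz constant at most $j$). This makes $F_j$ genuinely closed.
\item Use the classical linear formulation. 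The pairs $(\Phi,u)$ with $u\in C^2(D_{1/j})$, $\mathrm{P}u=0$ and data $\Phi$ form a Fr\'echet space, and their projections onto $X$ cover $X$. Hence one projection has second-category range, and the open mapping theorem makes it onto.
\end{itemize}
With this repair Step 4 is fine: linearity alone gives solutions for small data, and uniqueness only serves the word ``unique''. This requires $D_\delta$ to be chosen lens-shaped, so that Holmgren's theorem applies on all of $D_\delta$.
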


We also need the following proposition.

\begin{proposition}\label{pro2}
	Assume that \(\theta \in C_{0}^{\infty}(\R)\) is an even
	function such that \(\theta \notin \gamma_{0}^{(2)}(\R)\). Let \(\Omega\) be a neighborhood of the origin of \(\R\)
	such that \(\operatorname{supp} \theta \subset \Omega
	\cap\left\{x_{0}=0\right\}\).
	Then the Cauchy problem
	\begin{align}
		\label{eq:MODN}
		\begin{cases}
			(\partial_{t}^{2} +
			i\partial_{x})u(t,x)=0,& t\in(0,T],x\in\mathbb{R};\\
			u(0,x)=0, V(0,x) = \theta(x),&  x\in\mathbb{R},
		\end{cases}
	\end{align}
	has no \(C^{2}(\Omega)\) solution.
\end{proposition}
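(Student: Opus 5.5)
The approach is by contradiction. Suppose $u\in C^2(\Omega)$ solves \eqref{eq:MODN}. I would show that $\theta$ must then satisfy a Gevrey $2$ type Fourier decay, $|\widehat{\chi\theta}(\xi)|\lesssim e^{-c|\xi|^{1/2}}$ for some $c>0$ and a suitable cut-off $\chi$ equal to $1$ near $\operatorname{supp}\theta$. This decay is equivalent to $\theta\in\gamma^{(2)}_0(\R)$, contradicting the hypothesis.

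\textbf{Step 1: reductions.} Shrink $\Omega$ to a rectangle $\{|t|<\delta,\ |x|<\delta\}$ containing $\operatorname{supp}\theta$ on $\{t=0\}$. The operator has constant, hence analytic, coefficients and $\{t=0\}$ is noncharacteristic. Holmgren's theorem therefore gives uniqueness of $C^2$ solutions. Since $u(-t,x)$ and $-u(t,x)$ solve the same problem, uniqueness shows that $u$ is odd in $t$.

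\textbf{Step 2: duality identity.} For $\xi>0$, pair the equation against the adjoint solution
\begin{align*}
w_\xi(t,x)=\chi(x)\,e^{-ix\xi}\,\frac{\sinh\big((\delta'-t)\sqrt{\xi}\big)}{\sinh(\delta'\sqrt{\xi})},\qquad 0<\delta'<\delta ,
\end{align*}
where $a(t)=\sinh((\delta'-t)\sqrt\xi)$ is the $t$-factor of $w_\xi$, and integrate by parts over $[0,\delta']\times\R$.
\begin{itemize}
\item The boundary term at $t=0$ gives $\widehat{\chi\theta}(\xi)$, since $u(0,\cdot)=0$.
\item The boundary terms at $t=\delta'$ are bounded by $C\sqrt{\xi}/\sinh(\delta'\sqrt\xi)$, using $u\in C^2$. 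This is exponentially small, $O(e^{-\delta'\sqrt{\xi}/2})$.
\item The commutator terms come from $\chi'$ and are supported in $\{\delta/2<|x|<\delta\}$. They are of the form $\int_0^{\delta'}\!\int \chi'(x)e^{-ix\xi}u(t,x)\,\frac{a(t)}{\sinh(\delta'\sqrt\xi)}\,dx\,dt$.
\end{itemize}
For $\xi<0$ the $t$-factor is oscillatory rather than growing. There I would use the evenness of $\theta$: $x\mapsto\overline{u(t,-x)}$ solves the same equation (conjugation flips the sign of $i$, reflection flips $\partial_x$) with datum $\bar\theta(-x)$. Hence the estimate for $\xi\to-\infty$ reduces to the one for $\xi\to+\infty$.

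\textbf{Step 3 (main obstacle): the cut-off errors.} The factor $a(t)/\sinh(\delta'\sqrt\xi)$ is of size $1$ near $t=0$, so the commutator terms are not automatically small. One needs that $u(t,\cdot)$ is itself "Gevrey $2$ small" in $x$ on the support of $\chi'$, in the sense that its Fourier transform there decays like $e^{-c|\xi|^{1/2}}$ uniformly for small $t$.

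What I would try first is to exploit that $x$ plays the role of time for the Schrödinger-type equation $\partial_x u=i\partial_t^2u$, with $u(0,x)=0$ and $\partial_t u(0,x)=0$ for $x$ outside $\operatorname{supp}\theta$. On the strips $\{\delta/2<|x|<\delta\}$, where the Cauchy data vanish, Holmgren's theorem again gives $u\equiv0$ near $t=0$. Thus the commutator integrand vanishes for $t<t_0$. For $t\ge t_0$ the weight satisfies $a(t)/\sinh(\delta'\sqrt\xi)\le Ce^{-t_0\sqrt\xi}$. This yields the bound $|\widehat{\chi\theta}(\xi)|\le Ce^{-c\sqrt{|\xi|}}$ with $c=\min(t_0,\delta'/2)$, hence $\theta\in\gamma^{(2)}_0$, the desired contradiction.

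The delicate point is making this local uniqueness near the lateral strips quantitative. In particular one must choose the Holmgren lens-shaped domains so that $t_0>0$ is uniform over the support of $\chi'$. This is where I expect the real work to lie.
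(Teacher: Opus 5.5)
Your proposal is correct and is essentially the paper's proof. The paper's test function $U_\rho=e^{-i\rho^2x+\rho(T-t)}$ is your $w_\xi$ with $\rho=\sqrt{\xi}$, using an exponential instead of $\sinh$ and no cut-off, and the paper likewise uses Holmgren to remove the region away from $\operatorname{supp}\theta$, gets $|\hat\theta(\xi)|\lesssim e^{-c\sqrt{\xi}}$ on one half-line, and uses evenness of $\theta$, hence of $\hat\theta$, for the other half-line (so your reflection $\overline{u(t,-x)}$ is superfluous). The uniform $t_0$ you flag as the real work is routine: local Holmgren gives $u=0$ near each $(0,x_0)$ with $x_0\in\operatorname{supp}\chi'$, and compactness of $\operatorname{supp}\chi'$ yields $t_0>0$; moreover, since the principal symbol is $-\tau^2$, every graph $t=f(x)$ is noncharacteristic, so the Holmgren lenses can be taken of full height, which is how the paper gets $u=0$ on the whole lateral strip and avoids commutator terms altogether.
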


\begin{proof}
	We define
	\begin{align*}
		U_{\rho}(t,x)= e^{-i\rho^{2}x + iz\rho(T-t)} 
	\end{align*}
	and we choose $ z =-i $ so that $ U_{\rho}(t,x) $ is a particular
	solution of the homogeneous equation.
	Suppose that (\ref{eq:MODN}) has a \(C^{2}(\Omega)\) solution. Applying Holmgren's Theorem ( see Proposition 6.3 in \cite{NTLEC})  we conclude that we can assume \(u(x)=0\) if \(\left|t\right| \leq T\) and \(\left|x\right| \geq r\) with some small \(T>0\) and \(r>0\).\\	
	We note that, with $ (u,v) = \int_{\R}u(x)\bar{v}(x)dx $,
	\[
	\begin{aligned}
		\int_{0}^{T}\left(\mathrm{P} U_{\rho}, u\right) dt= & \int_{0}^{T}\left(U_{\rho}, \mathrm{P} u\right) dt+i\left(D_{t} U_{\rho}(T), u(T)\right) \\
		& +i\left(U_{\rho}(T), D_{t} u(T)\right)
		-i\left(U_{\rho}(0), D_{t} u(0)\right)
	\end{aligned}
	\]
	because \(u(0)=0\). From this we have
	\begin{equation}
		\label{eq:IDE1}
		\begin{aligned}
			\left(D_{t} U_{\rho}(T), u(T)\right)+\left(U_{\rho}(T), D_{t} u(T)\right) =\left(U_{\rho}(0), D_{t} u(0)\right).
		\end{aligned}
	\end{equation}
	We see that the left-hand side on (\ref{eq:IDE1}) is \(O\left(\rho\right)\). On the other hand the right-hand side is	
	\begin{align*}
		e^{\rho T}\hat{\theta}(-\rho^{2})
	\end{align*}
	where \(\hat{\theta}\) is the Fourier transform of \(\theta\). This comparison yields that, since $ \theta$ is even 	
	\begin{align*}
		|\hat{\theta}(\rho)| \leq C \rho^{1/2} e^{-c \rho^{1 / 2}} \leq C^{\prime} e^{-c^{\prime} \rho^{1 / 2}}
	\end{align*}
	with some \(c^{\prime}>0\) and $ \rho $ large. We   deduce that \(\theta \in \gamma_{0}^{(2)}(\R)\) which is a contradiction.
\end{proof}

\begin{proof}[Proof of Theorem \ref{ill1}]
	
Suppose that the Cauchy problem  (\ref{eq:MOD}) is locally solvable in \(\gamma^{(s)}\) at
the origin with some \(s>2\).
Choose \(s^{\prime}\) so that \(s>s^{\prime}>2\),  and choose a compact
neighborhood \(K\) of the origin of \(\R\) and a positive
\(h>0\).
Then from Proposition \ref{pro1} there exists \(D_{\delta}\) such that
the Cauchy problem for \(\mathrm{P}\) has a \(C^{2}\left(D_{\delta}\right)\)
solution for any Cauchy data in \(\left(\gamma^{(s),
    h}(L)\right)^{2}\). We now choose
 \(\theta \in \gamma_{0}^{\left(s^{\prime}\right)}(\R)\) so
that \(\operatorname{supp}  \theta \subset K
\cap\left(D_{\delta} \cap\left\{x_{0}=0\right\}\right)\) which satisfy
the conditions in Proposition \ref{pro2} below. Since it is clear that \(
\theta \in \gamma_{0}^{(s), h}(K)\) because \(s>s^{\prime}\) one can
apply Proposition \ref{pro1} to conclude that there is a
\(C^{2}\left(D_{\delta}\right)\) solution to (6.16),  while this
contradicts  proposition \ref{pro2}.
\end{proof}








\section{The stochastic energy estimate}
\label{SEE}
In this section we prove the main theorem of our work, i.e. Theorem \eqref{MTH}. We start taking the Fourier transform with respect to $x$ in \eqref{eq:STRA} to get
\begin{align}
  \label{eq:MSF}
  \begin{cases}
    d\wh U(t;\xi) = \wh V(t;\xi)dt + i\sigma\xi\wh U(t;\xi)\circ dB(t),& t\in (0,T],\xi\in\mathbb{R};\\
d\wh V(t;\xi) = \xi\wh U(t;\xi)dt,& t\in (0,T],\xi\in\mathbb{R};\\
    \wh U(0;\xi)=\wh \varphi_0(\xi),\wh V(0;\xi)=\wh\varphi_1(\xi),& \xi\in\mathbb{R},
  \end{cases}
  \end{align}
which in It\^o's form reads
\begin{align}
\label{ito}
\begin{cases}
    d\wh U(t;\xi) = \left[\wh V(t;\xi)-\frac{\sigma^2}{2}\xi^2\wh U(t;\xi)\right]dt + i\sigma\xi\wh U(t;\xi)dB(t),& t\in (0,T],\xi\in\mathbb{R};\\
d\wh V(t;\xi) = \xi\wh U(t;\xi)dt,& t\in (0,T],\xi\in\mathbb{R};\\
\wh U(0;\xi)=\wh \varphi_0(\xi),\wh V(0;\xi)=\wh\varphi_1(\xi),& \xi\in\mathbb{R},
\end{cases}
\end{align}
For $t\in [0,T]$ and $\xi\in\mathbb{R}$ we define
\begin{align}\label{m}
m_1(t;\xi):=\E|\wh U (t;\xi)|^2,\qquad m_2(t;\xi):=\E\,\Re\left(\overline{\wh U (t;\xi)}\wh V (t;\xi)\right),\qquad m_3(t;\xi):=\E|\wh V(t;\xi)|^2.
\end{align}
The idea is to do an energy estimate, of the type of Section
(\ref{sec:suff}), at the expectation level. It is here that the role
of the perturbation $ \sigma $ can be seen to play its role.

\begin{lemma}[Second moments system]\label{lem:momentDerivation}
For any $\xi\in\mathbb{R}$ the function 
\begin{align*}
t\mapsto m(t;\xi)=(m_1(t;\xi),m_2(t;\xi),m_3(t;\xi))^{\top} 
\end{align*}
defined in \eqref{m} solves the linear system of ordinary differential equations
\begin{align}\label{ODE}
\begin{cases}
\dot{m_1}(t;\xi) = 2m_2(t;\xi),\\
\dot{m_2}(t;\xi) = \xi m_1(t;\xi) - \frac{\sigma^2}{2}\xi^2 m_2(t;\xi)+m_3(t;\xi),\\
\dot{m_3}(t;\xi) = 2\xi m_2(t;\xi);
\end{cases}
\end{align}
here, $\dot{m_i}(t;\xi):=\frac{d}{dt}m_i(t;\xi)$ for $i=1,2,3$. 
\end{lemma}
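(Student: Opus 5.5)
Fix $\xi\in\mathbb{R}$ and work with the Itô system \eqref{ito}, which for fixed $\xi$ is a linear SDE in $\mathbb{C}^2$ with constant coefficients. It therefore has a unique strong solution, and the solution has finite moments of every order on $[0,T]$. I will apply Itô's formula to the three quadratic functionals $|\wh U|^2$, $\Re(\overline{\wh U}\wh V)$ and $|\wh V|^2$, then take expectations. The stochastic integrals that appear are true martingales: their integrands are polynomial in $(\wh U,\wh V)$, and these have bounded second moments on $[0,T]$ by Gronwall applied to $\E|\wh U|^2+\E|\wh V|^2$. So their expectations vanish and only drift terms survive.

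\emph{Step 1, $m_1$.} Write $d\wh U = a\,dt + i\sigma\xi\wh U\,dB$ with $a=\wh V-\frac{\sigma^2}{2}\xi^2\wh U$. Then
\begin{align*}
d|\wh U|^2 = 2\Re(\overline{\wh U}\,d\wh U) + d\langle \wh U,\overline{\wh U}\rangle .
\end{align*}
The martingale part is $2\Re(i\sigma\xi|\wh U|^2)\,dB=0$, and the quadratic variation is $\sigma^2\xi^2|\wh U|^2dt$. The drift is therefore $2\Re(\overline{\wh U}\wh V)-\sigma^2\xi^2|\wh U|^2+\sigma^2\xi^2|\wh U|^2=2\Re(\overline{\wh U}\wh V)$. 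Taking expectations gives $\dot m_1=2m_2$. This is the Stratonovich cancellation: $e^{i\sigma\xi B}$ is unitary.

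\emph{Step 2, $m_3$.} Since $\wh V$ has finite variation, $d|\wh V|^2=2\Re(\overline{\wh V}\xi\wh U)\,dt$ with $\xi$ real. Hence $\dot m_3=2\xi m_2$.

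\emph{Step 3, $m_2$.} Because $\wh V$ has finite variation, there is no cross-variation term, and
\begin{align*}
d(\overline{\wh U}\wh V)=\overline{\wh U}\,\xi\wh U\,dt + \wh V\,\overline{d\wh U}.
\end{align*}
Here $\overline{d\wh U}=(\overline{\wh V}-\frac{\sigma^2}{2}\xi^2\overline{\wh U})dt - i\sigma\xi\overline{\wh U}\,dB$. Taking real parts and expectations gives
\begin{align*}
\dot m_2=\xi m_1+m_3-\tfrac{\sigma^2}{2}\xi^2 m_2 .
\end{align*}
This is the only equation in which the damping survives. The reason is that the noise rotates $\wh U$ but not $\wh V$, so the Itô correction is not compensated by a quadratic-variation term.

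The main point needing care is justifying that the expectations of the $dB$ integrals vanish and that $m_i$ are differentiable, so that the integral identities can be differentiated. Finite moments of the linear SDE, together with continuity of the resulting integrands in $t$, give this directly.
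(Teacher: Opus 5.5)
Your proposal is correct and follows the paper's proof: you apply It\^o's formula (via the It\^o form \eqref{ito}) to $|\wh U|^2$, $\overline{\wh U}\wh V$ and $|\wh V|^2$, use that $\wh V$ has no Brownian part, and take expectations. Your one addition is the justification that the $dB$-integrals have zero mean, which the paper leaves implicit; the square-integrability of the quadratic integrand $\sigma\xi\,\Im(\overline{\wh U}\wh V)$ requires fourth moments of $(\wh U,\wh V)$, and these come from your remark on finite moments of all orders for the linear SDE, not from the Gronwall bound on $\E|\wh U|^2+\E|\wh V|^2$ that you cite.
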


\begin{proof}
Applying It\^o's formula to $|\wh U(t;\xi)|^2=\wh U(t;\xi)\overline{\wh U(t;\xi)}$ gives
\begin{align*}
d|\wh U(t;\xi)|^2
= \overline{\wh U(t;\xi)}d\wh U(t;\xi) + \wh U(t;\xi) d\overline{\wh U(t;\xi)} + d\wh U(t;\xi)\overline{\wh U(t;\xi)}.
\end{align*}
Now, if we substitute $d\wh U(t;\xi)$ and $d\overline{\wh U(t;\xi)}$ with the corresponding expressions from \eqref{ito}, we find
\begin{align*}
d|\wh U(t;\xi)|^2
= 2\Re(\overline{\wh U(t;\xi)}\wh V(t;\xi))dt.
\end{align*}
Taking expectations yields $\dot{m_1}(t;\xi) = 2m_2(t;\xi)$, i.e.  the first equation in \eqref{ODE}.\\
Moreover, using It\^o's product rule we get
\begin{align*}
d(\overline{\wh U(t;\xi)}\wh V(t;\xi))=d\overline{\wh U(t;\xi)}\wh V(t;\xi)+\overline{\wh U(t;\xi)}d\wh V(t;\xi)+d\overline{\wh U(t;\xi)}d\wh V(t;\xi).
\end{align*}
Since $d\wh V(t;\xi)$ has no Brownian part, the last term vanishes and, upon taking real parts and expectations, we obtain
\begin{align*}
\dot{m_2}(t;\xi)&=\E|\wh V(t;\xi)|^2 + \xi\E|\wh U(t;\xi)|^2 - \frac{\sigma^2}{2}\xi^2\E\Re(\overline{\wh U(t;\xi)}\wh V(t;\xi))\\
&= \xi m_1(t;\xi) - \frac{\sigma^2}{2}\xi^2 m_2(t;\xi)+m_3(t;\xi).
\end{align*}
This corresponds to the second line in \eqref{ODE}.\\
Lastly, applying It\^o's formula to $|\wh V(t;\xi)|^2$ and recalling that $dV(t;\xi)$ has no stochastic part, we see that
\begin{align*}
d|\wh V(t;\xi)|^2 = 2\xi\,\Re(\overline{\wh U(t;\xi)}\wh V(t;\xi))dt.
\end{align*}
Taking expectations yields $\dot{m_3}(t;\xi) = 2\xi m_2(t;\xi)$ thus completing the proof.
\end{proof}

\quad

\begin{itemize}
\item \textbf{Eigenvector decomposition}
\end{itemize}

The eigenvalues of the matrix
\begin{align}\label{matrix}
A(\xi):=
\begin{bmatrix}
0 & 2 & 0\\
\xi & -\frac{\sigma^2}{2}\xi^2 & 1\\
0 & 2\xi & 0
\end{bmatrix}
\end{align}
associated with \eqref{ODE} are given by
\begin{align}\label{eq:eigs}
\lambda_0=0,\qquad
\lambda_\pm(\xi)=
-\frac{\sigma^2\xi^2}{4}
\pm
\frac14\sqrt{\sigma^4\xi^4+64\xi},
\end{align}
with the square root interpreted as the complex square root when the
argument is negative and the branch with $ \sqrt{x}>0 $ if $ x>0 $ is chosen.
A simple computation shows the very important result that the solutions of the system are uniformly bounded in $ |\xi| $. More precisely, we have the following result.

\begin{lemma}[Uniform bound]\label{lem:lambdaBound}
	For all $\xi\in\R$ and $\sigma\neq 0$,
	\begin{align*}
		\Re \lambda_+(\xi)\le \frac{2}{\sigma^{2/3}}= \lambda_+\left(\frac{2}{\sigma^{4/3}}\right);
	\end{align*}
	moreover,
	\begin{align*}
		\xi\le 0 \quad\mbox{ implies }\quad \Re \lambda_+(\xi)= -\frac{\sigma^2\xi^2}{4}\le 0.
	\end{align*}
\end{lemma}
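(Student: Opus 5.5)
The plan is to avoid optimizing the explicit formula \eqref{eq:eigs} directly and to use instead the quadratic equation that $\lambda_\pm(\xi)$ satisfy. Since $\lambda_0=0$ is factored out of the characteristic polynomial of $A(\xi)$ in \eqref{matrix}, the remaining two eigenvalues have sum $-\frac{\sigma^2\xi^2}{2}$ and product $\frac{1}{16}\big(\sigma^4\xi^4-(\sigma^4\xi^4+64\xi)\big)=-4\xi$. Hence
\begin{align*}
\lambda^2+\frac{\sigma^2\xi^2}{2}\lambda-4\xi=0,\qquad \lambda=\lambda_\pm(\xi).
\end{align*}
I would first verify this identity directly from \eqref{eq:eigs} and then split according to the sign of $\xi$.

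\emph{Case $\xi>0$.} The discriminant $\sigma^4\xi^4+64\xi$ is positive, so $\lambda_+(\xi)$ is real. Moreover $\sqrt{\sigma^4\xi^4+64\xi}>\sigma^2\xi^2$, so $\lambda_+>0$. Rewrite the quadratic as
\begin{align*}
\lambda_+^2=4\xi-\frac{\sigma^2\lambda_+}{2}\xi^2 .
\end{align*}
For fixed $\lambda_+>0$, the right-hand side is a concave parabola in $\xi$. Its maximum is $\frac{8}{\sigma^2\lambda_+}$, attained at $\xi=\frac{4}{\sigma^2\lambda_+}$. Therefore $\lambda_+^3\le 8/\sigma^2$, that is, $\lambda_+(\xi)\le 2\sigma^{-2/3}$. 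To identify where equality holds, I would substitute $\xi_0=2\sigma^{-4/3}$ into \eqref{eq:eigs}. This gives $\sigma^4\xi_0^4+64\xi_0=144\sigma^{-4/3}$, so the square-root term equals $3\sigma^{-2/3}$, while $-\sigma^2\xi_0^2/4=-\sigma^{-2/3}$. Thus $\lambda_+(\xi_0)=2\sigma^{-2/3}$, consistent with the equality condition $\xi_0=4/(\sigma^2\lambda_+)$.

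\emph{Case $\xi\le 0$.} At $\xi=0$ we have $\lambda_+=0$. For $\xi<0$ there are two subcases. If $-4\sigma^{-4/3}<\xi<0$, the discriminant $\sigma^4\xi^4+64\xi$ is negative, so the square root is purely imaginary and $\Re\lambda_+(\xi)=-\sigma^2\xi^2/4\le0$, as claimed. If instead $\xi\le-4\sigma^{-4/3}$, the discriminant is nonnegative but smaller than $\sigma^4\xi^4$. Then $\lambda_+$ is real and satisfies $-\sigma^2\xi^2/4\le\lambda_+<0$.

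In the last subcase I expect the stated equality $\Re\lambda_+=-\sigma^2\xi^2/4$ to fail, although the inequality $\le0$ still holds. In the write-up I would therefore state the identity only on $-4\sigma^{-4/3}\le\xi\le0$ and record $\Re\lambda_+\le0$ for all $\xi\le0$. That sign bound is what matters for the uniform bound. The only delicate point overall is the maximization in the case $\xi>0$, and the quadratic-identity trick above removes it.
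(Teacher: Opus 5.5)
Your proof is correct, and it proves the right version of the lemma. The paper gives no proof at all: the lemma is preceded only by ``a simple computation shows'', presumably meaning direct calculus on the explicit formula \eqref{eq:eigs} for $\xi>0$.

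\textbf{Comparison of routes.} You work with the reduced characteristic polynomial instead. Indeed $\det(A(\xi)-\lambda I)=-\lambda\bigl(\lambda^2+\frac{\sigma^2\xi^2}{2}\lambda-4\xi\bigr)$. Combined with the concave-parabola bound in $\xi$, this avoids differentiating the square root. It gives the bound $\lambda_+^3\le 8/\sigma^2$ and the maximizer $\xi=4/(\sigma^2\lambda_+)=2\sigma^{-4/3}$ in one step. The only ingredient you need is $\lambda_+>0$ for $\xi>0$, which you establish.

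\textbf{The second claim is false as stated, and you are right about it.} For $\xi<-4\sigma^{-4/3}$ the discriminant $\sigma^4\xi^4+64\xi$ is positive. So $\lambda_+$ is real and strictly larger than $-\sigma^2\xi^2/4$. For example, $\sigma=1$ and $\xi=-8$ give $\lambda_+=-16+\sqrt{224}\approx-1.03$. The equality $\Re\lambda_+(\xi)=-\sigma^2\xi^2/4$ holds exactly on $[-4\sigma^{-4/3},0]$. Only the inequality $\Re\lambda_+(\xi)\le 0$ holds on all of $\xi\le 0$.

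That inequality is all that is needed. It also gives the first claim $\Re\lambda_+\le 2\sigma^{-2/3}$ for $\xi\le0$. The erroneous equality is never used later: the proof of Theorem~\ref{thm:F} relies instead on the large-$|\xi|$ asymptotics $\Re\lambda_+=O(1/|\xi|)$ behind Lemma~\ref{lem:coef}. So your corrected formulation loses nothing downstream.
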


Notice that an eigenbasis for \eqref{matrix} is
\begin{align*}
v_0(\xi)=\begin{pmatrix}1\\0\\-\xi\end{pmatrix},\qquad
v_\pm(\xi)=\begin{pmatrix}1\\ \frac{\lambda_\pm(\xi)}{2}\\ \xi\end{pmatrix}.
\end{align*}
Moreover, from \eqref{m} and Cauchy-Schwarz inequality we get 
\begin{equation}\label{eq:cone}
|m_2(t;\xi)|\le \sqrt{m_1(t;\xi)m_3(t;\xi)}\le \frac{m_1(t;\xi)+m_3(t;\xi)}{2}, \quad t\in [0,T];\xi\in\mathbb{R},
\end{equation}
thus implying that the second component of the vector $m(t)=(m_1(t;\xi),m_2(t;\xi),m_3(t;\xi))^{\top}$ can be controlled by the other two.\\
We now define the weighted energy
\begin{equation}\label{eq:Fdef}
F(t;\xi):=m_1(t;\xi)+\ang{\xi}^{-1}m_3(t;\xi)\quad\mbox{ with }\quad \ang{\xi}:=\sqrt{1+\xi^2}.
\end{equation}
To ease the notation we also write
\begin{align*}
\gamma:=\frac{\sigma^2}{2}\xi^2\quad\mbox{ and }\quad\Delta:=\sqrt{\gamma^2+16\xi}
\end{align*}
so that
\begin{align*}
\lambda_\pm(\xi)=\frac{-\gamma\pm \Delta}{2}.
\end{align*}
Writing the solution to \eqref{ODE} in eigenvector coordinates, i.e.
\begin{align*}
m(t;\xi)=q_0v_0(\xi)+q_+(t;\xi)v_+(\xi) + q_-(t;\xi)v_-(\xi), 
\end{align*}
we have
\begin{align}\label{eq:q-evol}
q_0(t)=q_0(0),\qquad
q_+(t;\xi)=e^{\lambda_+(\xi) t}q_+(0),\qquad
q_-(t;\xi)=e^{\lambda_-(\xi) t}q_-(0).
\end{align}
Moreover,
\begin{align*}
m_1(t;\xi)=q_0+q_+(t;\xi)+q_-(t;\xi),\qquad
m_3(t;\xi)=-\xi q_0+\xi\bigl(q_+(t;\xi)+q_-(t;\xi)\bigr),
\end{align*}
and therefore using \eqref{eq:Fdef} we get
\begin{align*}
F(t;\xi)=\Bigl(1-\frac{\xi}{\ang{\xi}}\Bigr)q_0
     +\Bigl(1+\frac{\xi}{\ang{\xi}}\Bigr)\bigl(q_+(t;\xi)+q_-(t;\xi)\bigr).
\end{align*}
On the other hand, since $0\le 1\pm \frac{\xi}{\ang{\xi}}\le 2$, we obtain the basic estimate
\begin{equation}\label{eq:F-basic}
F(t;\xi)\le 2|q_0|+2|q_+(t;\xi)|+2|q_-(t;\xi)|.
\end{equation}

\begin{notation*}
In the sequel to ease the notation we will omit to write in the functions $m$, $q_+$, $q_-$, $\lambda_+$ and $\lambda_-$ the explicit dependence on $\xi$.
\end{notation*}
  
Let $m(0):=(m_{10},m_{20},m_{30})^\top$. A direct computation yields
\begin{align}
q_0(0) &= \frac12 m_{10}-\frac{1}{2\eta}m_{30}, \label{eq:q0}\\
q_+(0) &= -\frac{\lambda_-}{2\Delta}m_{10}
         +\frac{2}{\Delta}m_{20}
         -\frac{\lambda_-}{2\eta\Delta}m_{30},\label{eq:qp}\\
q_-(0) &= \frac{\lambda_+}{2\Delta}m_{10}
         -\frac{2}{\Delta}m_{20}
         +\frac{\lambda_+}{2\eta\Delta}m_{30}. \label{eq:qm}
\end{align}
Note that, from \eqref{eq:Fdef} at $t=0$,
\begin{equation}\label{eq:F0-trivial}
m_{10}\le F(0;\xi),\qquad m_{30}\le \ang{\xi}\,F(0;\xi).
\end{equation}
Also, by \eqref{eq:cone} at $t=0$,
\begin{equation}\label{eq:m20-bound}
|m_{20}|\le \frac{m_{10}+m_{30}}{2}.
\end{equation}

\begin{lemma}[Large-$|\xi|$ coefficient bounds]\label{lem:coef}
There exist $\xi_0\ge1$ and $C>0$ (depending only on $\sigma$) such that for all $|\xi|\ge\xi_0$,
\begin{equation}\label{eq:coef}
\frac1{|\Delta|}\le\frac{C}{\xi^2},\qquad
\Bigl|\frac{\lambda_-}{\Delta}\Bigr|\le C,\qquad
\Bigl|\frac{\lambda_-}{\xi\Delta}\Bigr|\le\frac{C}{|\xi|},\qquad
\Bigl|\frac{\lambda_+}{\Delta}\Bigr|\le\frac{C}{|\xi|^3},\qquad
\Bigl|\frac{\lambda_+}{\xi\Delta}\Bigr|\le\frac{C}{|\xi|^4}.
\end{equation}
\end{lemma}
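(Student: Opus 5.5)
The plan is to prove the five bounds in \eqref{eq:coef} by an asymptotic expansion of $\Delta$, $\lambda_+$ and $\lambda_-$ as $|\xi|\to\infty$, with $\gamma=\frac{\sigma^2}{2}\xi^2$ as the dominant scale.

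\textbf{Step 1: expansion of $\Delta$.} For $|\xi|$ large we have $16|\xi|/\gamma^2 = 64/(\sigma^4|\xi|^3)\le \frac12$, so $\gamma^2+16\xi$ is positive. Hence $\Delta$ is a genuine positive real square root, with no complex branch issues, and we may write
\begin{align*}
\Delta=\gamma\sqrt{1+\varepsilon},\qquad \varepsilon:=\frac{16\xi}{\gamma^2}=\frac{64}{\sigma^4\xi^3},\qquad |\varepsilon|\le\tfrac12 .
\end{align*}
Then $\Delta\ge \gamma/\sqrt2=\frac{\sigma^2}{2\sqrt2}\xi^2$, which gives the first bound $1/|\Delta|\le C/\xi^2$. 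We also have $\Delta\le \sqrt{3/2}\,\gamma$.

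\textbf{Step 2: the eigenvalue $\lambda_-$.} Since $\lambda_-=-(\gamma+\Delta)/2$, we get $|\lambda_-|\le C\gamma\le C\xi^2$. Therefore $|\lambda_-/\Delta|\le C$, and dividing by $|\xi|$ gives $|\lambda_-/(\xi\Delta)|\le C/|\xi|$.

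\textbf{Step 3: the eigenvalue $\lambda_+$.} This is the only step needing care, because $\lambda_+=(\Delta-\gamma)/2$ involves a cancellation and the crude bound $|\lambda_+|\lesssim\xi^2$ is not enough. To remove the cancellation we rationalize:
\begin{align*}
\lambda_+=\frac{\Delta^2-\gamma^2}{2(\Delta+\gamma)}=\frac{8\xi}{\Delta+\gamma}.
\end{align*}
Since $\Delta+\gamma\ge\gamma=\frac{\sigma^2}{2}\xi^2$, this yields $|\lambda_+|\le \frac{16}{\sigma^2|\xi|}$. Combining with Step 1, $|\lambda_+/\Delta|\le C|\xi|^{-1}\xi^{-2}=C/|\xi|^3$, and dividing by $|\xi|$ once more gives $C/|\xi|^4$.

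The threshold $\xi_0$ is fixed by the requirement $64/(\sigma^4\xi_0^3)\le\frac12$ together with $\xi_0\ge1$. The constant $C$ is the maximum of the constants obtained above, so both depend only on $\sigma$.
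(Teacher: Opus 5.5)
Your proof is correct and follows essentially the same route as the paper. Both arguments establish $\Delta\sim\gamma\sim\xi^2$ and $|\lambda_-|\lesssim\xi^2$, and reduce the $\lambda_+$ bounds to $\lambda_+=O(1/|\xi|)$; the only difference is that you get this last bound from the exact identity $\lambda_+=8\xi/(\Delta+\gamma)$ rather than the paper's expansion $\Delta=\gamma\sqrt{1+16\xi/\gamma^2}=\gamma+O(1/\xi)$, which has the small advantage of giving explicit constants and an explicit threshold $\xi_0$.
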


\begin{proof}
Since $\gamma=\frac{\sigma^2}{2}\xi^2$ and $\Delta=\sqrt{\gamma^2+16\xi}$, we have $\Delta\sim \gamma\sim \xi^2$ as $|\xi|\to\infty$,
hence $1/|\Delta|\lesssim 1/\xi^2$.\\
Moreover, $\lambda_-=\frac{-\gamma-\Delta}{2}$ satisfies $|\lambda_-|\sim \gamma\sim \xi^2$, so $|\lambda_-|/|\Delta|\lesssim 1$ and
$|\lambda_-|/(|\xi||\Delta|)\lesssim 1/|\xi|$.\\
Finally, $\lambda_+=\frac{-\gamma+\Delta}{2}=O(1/\xi)$ as $|\xi|\to\infty$, because
$\Delta=\gamma\sqrt{1+16\xi/\gamma^2}=\gamma+O(1/\xi)$, hence
$\lambda_+=(-\gamma+\Delta)/2=O(1/\xi)$. Therefore
$|\lambda_+|/|\Delta|\lesssim (1/|\xi|)/\xi^2=1/|\xi|^3$ and similarly
$|\lambda_+|/(|\xi||\Delta|)\lesssim 1/|\xi|^4$.
\end{proof}

\begin{lemma}[Control by $F(0;\xi)$ of eigenvector coefficients]\label{lem:qF}
There exist $\xi_0\ge1$ and $C>0$ such that for all $|\xi|\ge\xi_0$,
\begin{equation}\label{eq:qF}
|q_0(0)|+|q_+(0)|+|q_-(0)|\le C\,F(0;\xi).
\end{equation}
\end{lemma}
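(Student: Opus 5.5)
The plan is to bound each coefficient in \eqref{eq:q0}--\eqref{eq:qm} directly. Three earlier results supply the ingredients: the coefficient estimates of Lemma \ref{lem:coef}, the trivial bounds \eqref{eq:F0-trivial} for $m_{10}$ and $m_{30}$, and the cone inequality \eqref{eq:m20-bound} for $m_{20}$.

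I read the parameter $\eta$ in \eqref{eq:q0}--\eqref{eq:qm} as $\xi$. This is what makes $q_0 v_0 + q_+ v_+ + q_- v_-$ reproduce $m(0)$, given that $v_0=(1,0,-\xi)^\top$ and $v_\pm=(1,\lambda_\pm/2,\xi)^\top$. I would first check this by verifying the first and third components of that identity.

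I then fix $\xi_0\ge 1$ as in Lemma \ref{lem:coef}, so that $\ang{\xi}\le \sqrt2\,|\xi|$ holds for $|\xi|\ge\xi_0$. The key step is to convert everything to $F(0;\xi)$. By \eqref{eq:F0-trivial}, $m_{10}\le F(0;\xi)$ and $m_{30}\le \ang{\xi}F(0;\xi)\le \sqrt2|\xi|F(0;\xi)$. By \eqref{eq:m20-bound}, $|m_{20}|\le \tfrac12(1+\ang{\xi})F(0;\xi)\le C|\xi|F(0;\xi)$.

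For $q_0(0)$ this gives
\begin{align*}
|q_0(0)|\le \tfrac12 m_{10}+\frac{m_{30}}{2|\xi|}\le \Bigl(\tfrac12+\tfrac{\sqrt2}{2}\Bigr)F(0;\xi).
\end{align*}

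For $q_+(0)$ I estimate the three terms of \eqref{eq:qp} separately using \eqref{eq:coef}:
\begin{itemize}
\item $\bigl|\tfrac{\lambda_-}{2\Delta}\bigr|m_{10}\le C F(0;\xi)$;
\item $\tfrac{2}{|\Delta|}|m_{20}|\le \tfrac{C}{\xi^2}\,C|\xi|F(0;\xi)\le C' F(0;\xi)$;
\item $\bigl|\tfrac{\lambda_-}{2\xi\Delta}\bigr|m_{30}\le \tfrac{C}{|\xi|}\,\sqrt2|\xi| F(0;\xi)\le C' F(0;\xi)$.
\end{itemize}

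The bound for $q_-(0)$ is even better, since \eqref{eq:coef} gives $|\lambda_+/\Delta|\le C|\xi|^{-3}$ and $|\lambda_+/(\xi\Delta)|\le C|\xi|^{-4}$. Summing the three coefficient bounds yields \eqref{eq:qF}, with $C$ depending only on $\sigma$ through Lemma \ref{lem:coef}.

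The only delicate point is the mismatch between the weight $\ang{\xi}^{-1}$ in $F$ and the unweighted $m_{30}$ in the formulas. The possible growth of $m_{30}$ like $\ang{\xi}$ must be absorbed exactly by the decay factors of the coefficients in front of $m_{30}$: $1/|\xi|$ for $q_0$, and $|\lambda_-/(\xi\Delta)|\lesssim 1/|\xi|$ for $q_+$. This is where the precise rates in Lemma \ref{lem:coef} are needed. The same balance has to be checked for the term with $m_{20}$, where $1/|\Delta|\lesssim \xi^{-2}$ beats the $|\xi|$ growth inherited from \eqref{eq:m20-bound}. Apart from this bookkeeping, and confirming the meaning of $\eta$, the argument is a routine term-by-term estimate.
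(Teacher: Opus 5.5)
Your proposal is correct and follows essentially the same route as the paper. Both bound \eqref{eq:q0}--\eqref{eq:qm} term by term using Lemma~\ref{lem:coef}, \eqref{eq:F0-trivial} and \eqref{eq:m20-bound}, and absorb the possible $\ang{\xi}$-growth of $m_{30}$ via $\ang{\xi}/|\xi|\le\sqrt2$. Your reading $\eta=\xi$ is also right, and it is what the paper's own Step~1 implicitly uses. Checking the first and third components of $m(0)=q_0v_0+q_+v_++q_-v_-$ is exactly the right test, since the second component holds for any value of $\eta$.
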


\begin{proof}
\emph{Step 1: bound $q_0(0)$.}
By \eqref{eq:q0} and \eqref{eq:F0-trivial},
\[
|q_0(0)|\le \tfrac12 m_{10}+\tfrac1{2|\xi|}m_{30}
\le \tfrac12F(0;\xi)+\tfrac{\ang{\xi}}{2|\xi|}\frac{m_{30}}{\ang{\xi}}
\le C F(0;\xi),
\]
since $\ang{\xi}/|\xi|\le \sqrt{2}$ for $|\xi|\ge 1$.

\medskip
\noindent
\emph{Step 2: bound $q_+(0)$.}
Using \eqref{eq:qp} and Lemma~\ref{lem:coef},
\[
|q_+(0)|\le C m_{10}+\frac{C}{\xi^2}|m_{20}|+\frac{C}{|\xi|}m_{30}.
\]
We bound each term by $F(0;\xi)$. First, $m_{10}\le F(0;\xi)$.
Next, by \eqref{eq:m20-bound} and \eqref{eq:F0-trivial},
\[
\frac{1}{\xi^2}|m_{20}|\le \frac{1}{2\xi^2}(m_{10}+m_{30})
\le \frac{C}{\xi^2}F(0;\xi)+\frac{C}{\xi^2}\ang{\xi}F(0;\xi)
\le \frac{C}{|\xi|}F(0;\xi)\le C F(0;\xi),
\]
for $|\xi|\ge 1$.
Finally,
\[
\frac{1}{|\xi|}m_{30}\le \frac{\ang{\xi}}{|\xi|}\frac{m_{30}}{\ang{\xi}}
\le C F(0;\xi).
\]
Hence $|q_+(0)|\le C F(0;\xi)$.

\medskip
\noindent
\emph{Step 3: bound $q_-(0)$.}
Using \eqref{eq:qm} and Lemma~\ref{lem:coef},
\[
|q_-(0)|\le \frac{C}{|\xi|^3}m_{10}+\frac{C}{\xi^2}|m_{20}|+\frac{C}{|\xi|^4}m_{30}
\le C F(0;\xi),
\]
using again \eqref{eq:m20-bound} and \eqref{eq:F0-trivial} and $|\xi|\ge 1$. Summing the three bounds yields \eqref{eq:qF}.
\end{proof}

\begin{theorem}[Uniform weighted-energy bound for large $|\xi|$]\label{thm:F}
Fix $T>0$. There exist constants $C_1,C_2>0$ and $\xi_0\ge1$ such that for all $|\xi|\ge\xi_0$
and all $t\in[0,T]$,
\begin{equation}\label{eq:F-final}
F(t;\xi)\le C_1 e^{C_2 t}F(0;\xi).
\end{equation}
\end{theorem}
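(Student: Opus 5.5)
The bound follows by combining the basic estimate \eqref{eq:F-basic} with the explicit evolution \eqref{eq:q-evol} of the eigen-coordinates and then with Lemma~\ref{lem:qF}. Fix $T>0$ and take $\xi_0$ to be the larger of the thresholds in Lemmas~\ref{lem:coef} and~\ref{lem:qF}; for $|\xi|\ge\xi_0$ we also need $\Delta\neq0$, so that the three eigenvectors form a basis and the decomposition $m(t)=q_0v_0+q_+(t)v_+ +q_-(t)v_-$ is legitimate. Since $\Delta\sim\gamma\sim\xi^2$, this holds after enlarging $\xi_0$ if necessary. By \eqref{eq:F-basic} and \eqref{eq:q-evol},
\begin{align*}
F(t;\xi)\le 2|q_0(0)|+2e^{\Re\lambda_+ t}|q_+(0)|+2e^{\Re\lambda_- t}|q_-(0)|.
\end{align*}

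Next I bound the exponentials uniformly in $\xi$. For $\lambda_+$, Lemma~\ref{lem:lambdaBound} gives $\Re\lambda_+\le 2\sigma^{-2/3}$ for every $\xi$. For $\lambda_-$, note that $\Re\lambda_-=-\tfrac{\gamma}{2}-\tfrac12\Re\Delta$. With the chosen branch, $\Re\Delta\ge0$: it is a positive root when the radicand is positive, and purely imaginary otherwise. Hence $\Re\lambda_-\le0$. Taking $C_2:=2\sigma^{-2/3}$ and using $t\ge0$, each exponential factor is at most $e^{C_2t}$, and therefore
\begin{align*}
F(t;\xi)\le 2e^{C_2t}\bigl(|q_0(0)|+|q_+(0)|+|q_-(0)|\bigr).
\end{align*}

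Finally, Lemma~\ref{lem:qF} bounds the bracket by $C\,F(0;\xi)$, which gives \eqref{eq:F-final} with $C_1=2C$. The constants depend only on $\sigma$; indeed, $C_1,C_2$ do not even depend on $T$, so $T$ only fixes the range of $t$. The only delicate point is that the real parts of $\lambda_\pm$ are bounded above uniformly in $\xi$. For $\lambda_+$ this is exactly the content of Lemma~\ref{lem:lambdaBound}, and it is where the noise ($\sigma\neq0$) enters. For $\lambda_-$ it reduces to the branch convention for the square root.
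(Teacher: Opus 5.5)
Your proof is correct and follows the paper's argument: \eqref{eq:F-basic} combined with \eqref{eq:q-evol}, a uniform upper bound on the exponential factors, and then Lemma~\ref{lem:qF}. The only cosmetic difference is in bounding the exponents. You bound $\Re\lambda_+$ globally via Lemma~\ref{lem:lambdaBound} and get $\Re\lambda_-\le 0$ from the branch convention, whereas the paper uses the large-$|\xi|$ asymptotics $\Re\lambda_+=O(1/|\xi|)$ and $\Re\lambda_-\sim-\gamma$.
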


\begin{proof}
From \eqref{eq:F-basic} and \eqref{eq:q-evol},
\[
F(t;\xi)\le 2|q_0(0)|+2e^{\Re\lambda_+ t}|q_+(0)|+2e^{\Re\lambda_- t}|q_-(0)|.
\]
For $|\xi|$ large, $\Re\lambda_+=O(1/|\xi|)$ and $\Re\lambda_- \sim -\gamma \le 0$,
so there exist $\xi_0$ and $C_2>0$ such that for all $|\xi|\ge\xi_0$,
\[
e^{\Re\lambda_+ t}\le e^{C_2 t},\qquad e^{\Re\lambda_- t}\le 1,\qquad t\in[0,T].
\]
Therefore,
\[
F(t;\xi)\le 2e^{C_2 t}\bigl(|q_0(0)|+|q_+(0)|+|q_-(0)|\bigr).
\]
Applying Lemma~\ref{lem:qF} gives \eqref{eq:F-final}.
\end{proof}

The large-$|\xi|$ estimate in Theorem \ref{thm:F} leaves open the compact region
$|\xi|\le \xi_0$. On this region we can bound the weighted energy by a
uniform constant using compactness.

\begin{lemma}[Bounded-$\xi$ patch]\label{lem:boundedPatch}
Fix $T>0$ and $\xi_0\ge 1$. There exists a constant $M=M(T;\xi_0,\sigma)>0$ such that
for all $|\xi|\le \xi_0$ and all $t\in[0,T]$,
\[
F(t;\xi)\le M\,F(0;\xi).
\]
Consequently, for any $C_2\ge 0$,
\[
F(t;\xi)\le M\,e^{C_2 t}\,F(0;\xi),\qquad t\in[0,T].
\]
\end{lemma}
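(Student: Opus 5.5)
The plan is to avoid the eigenvector decomposition altogether on the compact region, since for small $|\xi|$ (in particular near $\xi=0$, where $\Delta$ may vanish and $v_+,v_-,v_0$ degenerate) the formulas \eqref{eq:q0}--\eqref{eq:qm} are not usable. Instead I will work directly with the linear system \eqref{ODE}, whose solution is $m(t;\xi)=e^{tA(\xi)}m(0;\xi)$ with $A(\xi)$ the matrix \eqref{matrix}. The entries of $A(\xi)$ are polynomials in $\xi$, so for $|\xi|\le\xi_0$ we have $\|A(\xi)\|\le K:=K(\xi_0,\sigma)$ in any fixed matrix norm. Consequently $|m(t;\xi)|\le e^{KT}|m(0;\xi)|$ for all $t\in[0,T]$ and all $|\xi|\le\xi_0$, by the elementary bound $\|e^{tA}\|\le e^{t\|A\|}$ (or by Gronwall applied to $|m|^2$).

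Next I compare $|m|$ with $F$ in both directions, using that the weight $\ang{\xi}^{-1}$ lies in $[\ang{\xi_0}^{-1},1]$ on this region. From above, $F(t;\xi)=m_1+\ang{\xi}^{-1}m_3\le m_1+m_3\le 2|m(t;\xi)|$, since $m_1,m_3\ge0$. From below, using \eqref{eq:cone} at $t=0$, $|m_{20}|\le (m_{10}+m_{30})/2$, so
\[
|m(0;\xi)|\le m_{10}+|m_{20}|+m_{30}\le \tfrac32(m_{10}+m_{30})\le \tfrac32\ang{\xi_0}\,F(0;\xi).
\]
Chaining these gives $F(t;\xi)\le 2e^{KT}\cdot\tfrac32\ang{\xi_0}\,F(0;\xi)=:M\,F(0;\xi)$ with $M=3\ang{\xi_0}e^{KT}$. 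This depends only on $T$, $\xi_0$ and $\sigma$, as required. The second assertion then follows trivially, since $e^{C_2t}\ge1$ when $C_2\ge0$.

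The only point needing care is the lower comparison $|m(0)|\lesssim F(0)$. It requires controlling the possibly sign-indefinite middle component $m_{20}$, which is exactly what the Cauchy–Schwarz cone bound \eqref{eq:cone} provides, together with the non-negativity of $m_1,m_3$. Everything else is a uniform bound on a continuous matrix exponential over the compact set $[0,T]\times[-\xi_0,\xi_0]$, so I do not expect a real obstacle. The main design choice is simply to bypass the spectral formulas, which are singular where $\Delta=0$, e.g. at $\xi=0$ or at $\xi<0$ with $\gamma^2=-16\xi$.
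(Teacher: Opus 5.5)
Your proof is correct and follows essentially the paper's route: write $m(t)=e^{tA(\xi)}m(0)$, bound the propagator uniformly on $[0,T]\times[-\xi_0,\xi_0]$, and compare with $F$ via $m_1,m_3\ge 0$ and the cone bound \eqref{eq:cone}. The paper conjugates by $W(\xi)=\mathrm{diag}[1,1,\ang{\xi}^{-1}]$ and invokes compactness, whereas you use $\|e^{tA}\|\le e^{t\|A\|}$ and absorb the weight into $\ang{\xi_0}$; your bookkeeping of $m_{20}$ is actually more careful than the paper's claim $\|W(\xi)m(0)\|_{\ell^1}\le 2F(0;\xi)$, since \eqref{eq:cone} only yields $|m_{20}|\le \tfrac12(m_{10}+m_{30})\le \tfrac12\ang{\xi}F(0;\xi)$, so a $\xi_0$-dependent constant such as your $\tfrac32\ang{\xi_0}$ is genuinely needed there.
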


\begin{proof}
Recalling the definition of $A(\xi)$ in \eqref{matrix}, define the diagonal weight
\begin{align*}
W(\xi):=\mathrm{diag }\left[1,\,1,\,\ang{\xi}^{-1}\right];
\end{align*}
then for each $\xi$,
\begin{align*}
W(\xi)m(t)=W(\xi)e^{tA(\xi)}W(\xi)^{-1}\,W(\xi)m(0).
\end{align*}
The map $(t;\xi)\mapsto B(t;\xi):=W(\xi)e^{tA(\xi)}W(\xi)^{-1}$ is continuous on the compact set
$[0,T]\times[-\xi_0;\xi_0]$ (matrix exponential is continuous and $W(\xi)$ is smooth); hence
\begin{align*}
M:=\sup_{t\in[0,T]}\sup_{|\xi|\le \xi_0}\|B(t;\xi)\|_{1\to 1}<\infty.
\end{align*}
Moreover, using that $m_1,m_3\ge 0$, the definition \eqref{eq:Fdef} implies
\[
F(t;\xi)=m_1(t)+\ang{\xi}^{-1}m_3(t)=\|W(\xi)m(t)\|_{\ell^1}-|m_2(t)|\le \|W(\xi)m(t)\|_{\ell^1},
\]
and similarly $F(0;\xi)\ge 0$ and $\|W(\xi)m(0)\|_{\ell^1}=F(0)+|m_2(0)|\le 2F(0;\xi)$ by \eqref{eq:cone}.
Therefore,
\[
F(t;\xi)\le \|W(\xi)m(t)\|_{\ell^1}
\le \|B(t;\xi)\|_{1\to 1}\,\|W(\xi)m(0)\|_{\ell^1}
\le 2M\,F(0;\xi),
\]
uniformly for $t\in[0,T]$ and $|\xi|\le \xi_0$.
The last statement follows from $e^{C_2 t}\ge 1$.
\end{proof}

Combining Lemma \ref{lem:boundedPatch} with Theorem \ref{thm:F} yields constants
$C_1^\ast,C_2^\ast>0$ such that for all $\xi\in\R$ and all $t\in[0,T]$,
\[
F(t;\xi)\le C_1^\ast e^{C_2^\ast t}F(0;\xi).
\]
Thus we have proved:

\begin{theorem}\label{thm:Global}
Fix $T>0$. There exist constants $C_1,C_2>0$ such that
for all $\xi \in \R$
and all $t\in[0,T]$,
\begin{align}\label{eq:F-finalGlobal}
F(t;\xi)\le C_1 e^{C_2 t}F(0;\xi).
\end{align}
\end{theorem}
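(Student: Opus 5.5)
The plan is to glue the two regimes already treated. Fix $T>0$. Theorem \ref{thm:F} supplies $\xi_0\ge1$ and constants $C_1,C_2>0$, depending only on $T$ and $\sigma$, such that $F(t;\xi)\le C_1e^{C_2t}F(0;\xi)$ for all $|\xi|\ge\xi_0$ and $t\in[0,T]$.

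With this same $\xi_0$ we apply Lemma \ref{lem:boundedPatch}. It gives a constant $M=M(T;\xi_0,\sigma)$ such that $F(t;\xi)\le 2M\,F(0;\xi)$ on the compact set $[0,T]\times[-\xi_0,\xi_0]$. The proof of that lemma actually yields the constant $2M$ rather than $M$, but only finiteness matters here. Since $e^{C_2t}\ge1$, this bound can be rewritten as $F(t;\xi)\le 2M e^{C_2t}F(0;\xi)$, using the same exponent $C_2$ as in the large-$|\xi|$ regime.

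Now set $C_1^\ast:=\max\{C_1,2M\}$ and $C_2^\ast:=C_2$. The two regions $\{|\xi|\ge\xi_0\}$ and $\{|\xi|\le\xi_0\}$ cover $\mathbb{R}$. On each of them, for every $t\in[0,T]$, we have $F(t;\xi)\le C_1^\ast e^{C_2^\ast t}F(0;\xi)$, and this is \eqref{eq:F-finalGlobal}.

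The only real care point is ordering the choices: $\xi_0$ must be fixed first by Theorem \ref{thm:F} and only then fed into Lemma \ref{lem:boundedPatch}, so that the constants do not depend circularly on each other. Beyond that, the only thing to check is that $F(0;\xi)\ge0$, which holds because $m_1,m_3\ge0$. This nonnegativity is what lets the two bounds be combined by simply taking the maximum of the constants.
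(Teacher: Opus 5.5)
Your proposal is correct and is exactly the paper's argument: fix $\xi_0$ from Theorem \ref{thm:F}, apply Lemma \ref{lem:boundedPatch} on $|\xi|\le\xi_0$, use $e^{C_2t}\ge1$ and $F(0;\xi)\ge0$, and take the maximum of the multiplicative constants. Your remark that the lemma's proof actually yields $2M$ rather than $M$ is accurate and does not affect the conclusion.
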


We now apply the standard Plancherel technique that upgrades the frequency estimate for $F(t;\xi)$
to Sobolev moment bounds in the $x$ variable.

\begin{lemma}[Sobolev moment bounds from the weighted energy]\label{lem:sobolevMoment2}
Fix $s\in\R$ and $T>0$. Assume that for all $\xi\in\R$ and all $t\in[0,T]$,
\begin{align}\label{GES}
F(t;\xi)\le C_1 e^{C_2 t}F(0;\xi),
\end{align}
where
\begin{align*}
F(t;\xi)=m_1(t;\xi)+\ang{\xi}^{-1}m_3(t;\xi),\qquad
m_1(t;\xi)=\E\,|\widehat U(t;\xi)|^2,\quad
m_3(t;\xi)=\E\,|\widehat V(t;\xi)|^2.
\end{align*}
Then,
\begin{align*}
\sup_{t\in[0,T]}
\left[\E\|U(t)\|_{H_x^s}^2+\E\|V(t)\|_{H_x^{s-\frac12}}^2\right]
\le C_1 e^{C_2 T}\left[
\|\varphi_0\|_{H_x^s}^2+\|\varphi_1\|_{H_x^{s-\frac12}}^2
\right].
\end{align*}
\end{lemma}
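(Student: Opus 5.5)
The approach is Plancherel combined with the pointwise-in-frequency bound \eqref{GES}. Since the initial data are deterministic, $m_1(0;\xi)=|\wh\varphi_0(\xi)|^2$ and $m_3(0;\xi)=|\wh\varphi_1(\xi)|^2$. Hence
\begin{align*}
F(0;\xi)=|\wh\varphi_0(\xi)|^2+\ang{\xi}^{-1}|\wh\varphi_1(\xi)|^2 .
\end{align*}
Using the normalisation $\|f\|_{H^s_x}^2=(2\pi)^{-1}\int_{\R}\ang{\xi}^{2s}|\wh f(\xi)|^2d\xi$, together with Tonelli's theorem to exchange $\E$ and $\int d\xi$ (the integrands are nonnegative), we obtain
\begin{align*}
\E\|U(t)\|_{H_x^s}^2+\E\|V(t)\|_{H_x^{s-\frac12}}^2=\frac{1}{2\pi}\int_{\R}\ang{\xi}^{2s}\Bigl(m_1(t;\xi)+\ang{\xi}^{-1}m_3(t;\xi)\Bigr)d\xi=\frac{1}{2\pi}\int_{\R}\ang{\xi}^{2s}F(t;\xi)\,d\xi .
\end{align*}
The key observation is that the weight $\ang{\xi}^{2(s-\frac12)}=\ang{\xi}^{2s}\ang{\xi}^{-1}$ is exactly what turns the Sobolev norms into the single quantity $F$. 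This is why the half-derivative shift between $U$ and $V$ appears.

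Next we multiply \eqref{GES} by $\ang{\xi}^{2s}\ge0$ and integrate in $\xi$. This gives
\begin{align*}
\frac{1}{2\pi}\int_{\R}\ang{\xi}^{2s}F(t;\xi)\,d\xi\le C_1e^{C_2t}\,\frac{1}{2\pi}\int_{\R}\ang{\xi}^{2s}\Bigl(|\wh\varphi_0(\xi)|^2+\ang{\xi}^{-1}|\wh\varphi_1(\xi)|^2\Bigr)d\xi .
\end{align*}
The right-hand side equals $C_1e^{C_2t}\bigl[\|\varphi_0\|_{H^s_x}^2+\|\varphi_1\|_{H^{s-\frac12}_x}^2\bigr]$. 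Bounding $e^{C_2t}\le e^{C_2T}$ and taking the supremum over $t\in[0,T]$ gives the claim.

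The only step needing care is the measurability and integrability that justify these identities. We need $\wh U(t;\xi)$ to be a jointly measurable random field whose Fourier transform in $x$ commutes with the expectation. The natural way to arrange this is to define the solution frequency-wise through \eqref{ito}, a linear SDE with explicit solution for each fixed $\xi$, and to define $U,V$ as inverse Fourier transforms. The bound just proved then shows that they lie in $L^2(\Omega;H^s_x)$ and $L^2(\Omega;H^{s-\frac12}_x)$ respectively. Beyond this bookkeeping there is no real obstacle: the estimate is a direct consequence of Tonelli's theorem and the positivity of the weights.
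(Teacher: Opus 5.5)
Your proposal is correct and follows essentially the same route as the paper. Like the paper, you multiply \eqref{GES} by $\ang{\xi}^{2s}$, integrate in $\xi$, and use Tonelli/Fubini together with Plancherel to identify both sides with the Sobolev moments; your $(2\pi)^{-1}$ normalisation is harmless since it appears on both sides, and your remarks on measurability and on $e^{C_2t}\le e^{C_2T}$ (which uses $C_2\ge 0$) only make explicit what the paper leaves implicit.
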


\begin{proof}
Multiply the frequency estimate (\ref{GES}), which was proved in
Theorem \ref{eq:F-finalGlobal}, by $\ang{\xi}^{2s}$ and integrate over $\xi\in\R$:
\begin{align*}
\int_{\R}\ang{\xi}^{2s}F(t;\xi)\,d\xi
\le
C_1 e^{C_2 t}\int_{\R}\ang{\xi}^{2s}F(0;\xi)\,d\xi.
\end{align*}
By definition of $F$,
\begin{align*}
\int_{\R}\ang{\xi}^{2s}F(t;\xi)\,d\xi
=
\int_{\R}\ang{\xi}^{2s}\E|\widehat U(t;\xi)|^2d\xi
+
\int_{\R}\ang{\xi}^{2s-1}\E|\widehat V(t;\xi)|^2d\xi,
\end{align*}
and using Fubini's theorem and Plancherel's identity in the $x$-variable gives
\begin{align*}
\int_{\R}\ang{\xi}^{2s}\E|\widehat U(t;\xi)|^2d\xi
=\E\|U(t)\|_{H_x^s}^2,
\qquad
\int_{\R}\ang{\xi}^{2s-1}\E|\widehat V(t;\xi)|^2\,d\xi
=\E\|V(t)\|_{H_x^{s-\frac12}}^2,
\end{align*}
since $\ang{\xi}^{2s-1}=\ang{\xi}^{2(s-\frac12)}$. The same identities hold at $t=0$ with
$U(0)=\varphi_0$ and $V(0)=\varphi_1$. Taking the supremum over $t\in[0,T]$ yields the claim.
\end{proof}

An immediate consequence of the  Lemma \ref{lem:sobolevMoment2} and Theorem \ref{thm:Global} is:

\begin{theorem}[Mean-square $H_x^s\times H_x^{s-\frac12}$ well-posedness]
\label{thm:meansquare}
Assume that the estimate of Lemma~\ref{lem:sobolevMoment2} holds for a given $s\in\R$.
Then the homogeneous Cauchy problem \eqref{eq:STRA} is well-posed in
$H_x^s\times H_x^{s-\frac12}$ in the \emph{mean-square sense}, namely:
\begin{itemize}
\item for every $(\varphi_0,\varphi_1)\in H_x^s\times H_x^{s-\frac12}$ there exists a unique
adapted solution $(U,V)$;
\item the solution satisfies
\begin{align*}
U\in L^\infty([0,T];L^2(\Omega;H_x^s)),\qquad
V\in L^\infty([0,T];L^2(\Omega;H_x^{s-\frac12}));
\end{align*}
\item the solution map $(\varphi_0,\varphi_1)\mapsto (U,V)$ is continuous with respect to
these norms.
\end{itemize}
\end{theorem}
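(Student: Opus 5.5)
The plan is to construct the solution frequency by frequency, obtain the bound from Lemma~\ref{lem:sobolevMoment2}, and then use linearity for uniqueness and continuity. Throughout, a \emph{solution} of \eqref{eq:STRA} means an adapted process $(U,V)$ with the regularity stated in Theorem~\ref{thm:meansquare} that satisfies the It\^o form of \eqref{eq:STRA} weakly in $x$. The It\^o form is the one whose Fourier transform is \eqref{ito}. Weakly in $x$ means: after pairing with any Schwartz function $\psi$, the identities hold as real It\^o equations for the scalar processes $(U(t),\psi)$ and $(V(t),\psi)$.

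\textbf{Existence.} For each fixed $\xi\in\R$, the system \eqref{ito} is a linear It\^o SDE in $\C^2$ with constant coefficients. It therefore has a unique strong solution $(\wh U(t;\xi),\wh V(t;\xi))$ with finite moments of all orders. I will realize it as the $L^2(\Omega)$-limit of Picard iterates. Each iterate is jointly measurable in $(t,\xi,\omega)$ and adapted in $t$, so the limit is too.

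The computation of Lemma~\ref{lem:momentDerivation} then applies, so its second moments solve \eqref{ODE}. By Theorem~\ref{thm:Global} they satisfy \eqref{GES}, and Lemma~\ref{lem:sobolevMoment2} gives
\[
\int_\R\ang{\xi}^{2s}\E|\wh U(t;\xi)|^2\,d\xi+\int_\R\ang{\xi}^{2s-1}\E|\wh V(t;\xi)|^2\,d\xi\le C_1e^{C_2T}\Bigl[\|\varphi_0\|_{H^s_x}^2+\|\varphi_1\|_{H^{s-\frac12}_x}^2\Bigr].
\]
By Fubini, for a.e.\ $\omega$ the functions $\wh U(t;\cdot)$ and $\wh V(t;\cdot)$ lie in the weighted $L^2$ spaces. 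Defining $U(t)$ and $V(t)$ by inverse Fourier transform then yields $U\in L^\infty([0,T];L^2(\Omega;H^s_x))$ and $V\in L^\infty([0,T];L^2(\Omega;H^{s-\frac12}_x))$.

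To check that $(U,V)$ solves \eqref{eq:STRA}, I pair with a Schwartz function $\psi$ and use Parseval, which reduces everything to integrals of the form $\int\wh U(t;\xi)\overline{\wh\psi(\xi)}\,d\xi$. The integrated form of \eqref{ito} is then integrated against $\overline{\wh\psi}$. The deterministic terms are exchanged by ordinary Fubini. The stochastic term $\int_0^t i\sigma\xi\wh U\,dB$ is exchanged by the stochastic Fubini theorem. Its hypothesis holds because $\E\int_0^T\bigl(\int|\xi||\wh U||\wh\psi|\,d\xi\bigr)^2dt<\infty$, which follows from Cauchy--Schwarz, the moment bound above and the rapid decay of $\wh\psi$.

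\textbf{Uniqueness and continuity.} Both reduce to the a priori estimate via linearity. Let $(U,V)$ be any solution in the class. Testing with $\psi$ and using stochastic Fubini in the reverse direction, I will show that for a.e.\ $\xi$ the Fourier transforms satisfy \eqref{ito}. To do this, I take $\psi$ ranging over a countable family whose Fourier transforms are dense, e.g.\ $\wh\psi=\wh\psi_0(\cdot)\,\mathbbm 1_{[a,b]}$-approximants with rational endpoints. This identifies the integrated equation as an identity between $L^2_{\mathrm{loc}}$ functions of $\xi$, holding for all $t$ simultaneously by continuity in $t$ of the stochastic integrals.

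The difference of two solutions with the same data then solves \eqref{ito} with zero initial condition for a.e.\ $\xi$. By uniqueness for linear SDEs it vanishes; equivalently, Lemma~\ref{lem:momentDerivation} and Theorem~\ref{thm:Global} give $F(t;\xi)\le C_1e^{C_2t}\cdot 0$. Continuity of the solution map follows the same way: $(\varphi_0,\varphi_1)\mapsto(U,V)$ is linear, so applying the estimate of Lemma~\ref{lem:sobolevMoment2} to differences of data gives Lipschitz continuity in the stated norms.

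\textbf{Main obstacle.} I expect the main difficulty to be the reverse identification step: showing that an arbitrary $H^s$-valued solution, defined only weakly in $x$, has Fourier transform solving the scalar SDE \eqref{ito} for almost every $\xi$. This matters because Lemma~\ref{lem:momentDerivation}, and hence the energy estimate, is only available at the level of individual frequencies. I would handle it through the countable dense test family and stochastic Fubini, as described above. The measurability of the constructed $\xi$-family is routine by comparison.
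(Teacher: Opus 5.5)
Your proposal is correct and follows the paper's argument: the mode-wise linear SDE, the uniform frequency bound of Theorem~\ref{thm:Global}, Plancherel via Lemma~\ref{lem:sobolevMoment2}, and linearity for uniqueness and continuous dependence. The paper simply declares the theorem an immediate consequence of those two results, whereas you spell out the construction, the weak-in-$x$ verification and the frequency-wise identification.

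One small slip concerns stochastic Fubini. The standard hypothesis is the Minkowski-type condition $\int_{\R}|\xi||\wh\psi(\xi)|\bigl(\E\int_0^T|\wh U(t;\xi)|^2\,dt\bigr)^{1/2}d\xi<\infty$, which is stronger than the one you state; yours alone does not even guarantee that the inner stochastic integrals exist for a.e.\ $\xi$. It does, however, follow from the same Cauchy--Schwarz argument and moment bound.
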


\begin{corollary}[Mean-square $C^\infty$ well-posedness]
Assume that the estimate of Lemma \ref{lem:sobolevMoment2} holds for all $s\ge0$.
Then the Cauchy problem (\ref{eq:STRA}) is well-posed in $H_x^\infty$ in the mean-square
sense:
\begin{align*}
U\in \bigcap_{k\ge0} L^\infty([0,T];L^2(\Omega;H_x^k)),
\qquad
V\in \bigcap_{k\ge0} L^\infty([0,T];L^2(\Omega;H_x^{k-\frac12})).
\end{align*}
\end{corollary}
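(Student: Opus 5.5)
The corollary follows from Theorem~\ref{thm:meansquare} applied for every integer $k\ge0$, together with a compatibility argument showing that the solutions obtained for different $k$ coincide.

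\textbf{Step 1: solutions for each fixed $k$.} Let $(\varphi_0,\varphi_1)\in H_x^\infty\times H_x^\infty$. Then $(\varphi_0,\varphi_1)\in H_x^k\times H_x^{k-\frac12}$ for every $k\ge0$. By hypothesis the estimate of Lemma~\ref{lem:sobolevMoment2} holds with $s=k$; this is in fact automatic, since Theorem~\ref{thm:Global} gives \eqref{GES} uniformly in $\xi$ and with constants $C_1,C_2$ independent of $s$. Theorem~\ref{thm:meansquare} then yields, for each $k$, a unique adapted solution
\[
(U^{(k)},V^{(k)})\in L^\infty([0,T];L^2(\Omega;H_x^k))\times L^\infty([0,T];L^2(\Omega;H_x^{k-\frac12})).
\]

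\textbf{Step 2: compatibility.} For $k'\ge k$ we have the continuous embedding $H_x^{k'}\subset H_x^k$, so $(U^{(k')},V^{(k')})$ is also an adapted solution in the class of level $k$. Uniqueness at level $k$ then forces $(U^{(k')},V^{(k')})=(U^{(k)},V^{(k)})$. Hence a single pair $(U,V)$ belongs to all these spaces, which is exactly the intersection stated in the corollary. Uniqueness in the intersection class follows from uniqueness at $k=0$, and continuity of the solution map holds in each seminorm, with bound $C_1e^{C_2T}$, by Lemma~\ref{lem:sobolevMoment2}.

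\textbf{Main obstacle.} The delicate point is that the uniqueness in Theorem~\ref{thm:meansquare} must be meaningful. If it is taken as given, nothing remains to prove. If one wants to check it directly, I would argue in Fourier variables. For each fixed $\xi$, system \eqref{ito} is a linear SDE in $\C^2$ with constant coefficients, so it has a pathwise unique strong solution. The solution can be written explicitly: $\wh U$ is the deterministic-drift part multiplied by $e^{i\sigma\xi B(t)}$, coupled with $\wh V$. The difference of two solutions solves the same system with zero data, so its second moments $m(t;\xi)$ solve \eqref{ODE} with $m(0)=0$, hence vanish. This yields $U=U'$ and $V=V'$ almost surely, for a.e.\ $\xi$ and every $t$.

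Measurability in $\xi$ and the Plancherel step are routine once the moment bounds are in hand. If the continuous-in-time version of Theorem~\ref{MTH} is also wanted, it follows from the continuity of $t\mapsto m(t;\xi)$ combined with dominated convergence, using \eqref{GES} as the dominating bound.
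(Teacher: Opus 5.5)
Your proposal is correct and follows the paper's route. The paper gives no separate proof and simply reads the corollary off Theorem~\ref{thm:meansquare} applied for every $s\ge0$; your compatibility-by-uniqueness step and the mode-wise uniqueness check (pathwise uniqueness of the linear SDE, or the system \eqref{ODE} with zero data) only make explicit what the paper leaves tacit.

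One minor caveat, outside the scope of this corollary: in your closing remark on time continuity, continuity of $t\mapsto m(t;\xi)$ alone does not give $\E|\wh U(t;\xi)-\wh U(t_0;\xi)|^2\to0$. You need mean-square continuity of each mode, e.g.\ from the SDE increment bound, as in Proposition~\ref{lem:timeCont}.
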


Due to the fact that (\ref{eq:MSF}) is a linear system we can also
improve the regularity of the solution process from $ L^{\infty}[0,T]  $ to
$ C[0,T] $ utilizing the following proposition:

\begin{proposition}[Time continuity in $L^2(\Omega;H_x^s)$ from mode-wise SDE continuity]\label{lem:timeCont}
Fix $s\in\R$ and $T>0$. Let $(U,V)$ be a (homogeneous) solution whose $x$-Fourier
transform $\widehat U(t;\xi)$ and $\widehat{V}(t;\xi)$ satisfy,
for each fixed $\xi\in\R$, a linear finite-dimensional SDE system with continuous
sample paths in $t$ (e.g.\ the mode system obtained by Fourier transforming a
linear stochastic partial differential equation in $x$). Assume moreover the uniform Sobolev moment bounds
\begin{align}\label{eq:uniformSob}
\sup_{t\in[0,T]}\E\|U(t)\|_{H_x^s}^2<\infty,
\qquad
\sup_{t\in[0,T]}\E\|V(t)\|_{H_x^{s-\frac12}}^2<\infty.
\end{align}
Then,
\begin{align*}
U\in C\big([0,T];L^2(\Omega;H_x^s)\big),
\qquad
V\in C\big([0,T];L^2(\Omega;H_x^{s-\frac12})\big).
\end{align*}
\end{proposition}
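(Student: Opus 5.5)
The plan is to prove continuity at a fixed $t_0\in[0,T]$ by writing, via Plancherel and Fubini,
\begin{align*}
\E\|U(t)-U(t_0)\|_{H_x^s}^2=\int_{\R}\ang{\xi}^{2s}\,\E|\wh U(t;\xi)-\wh U(t_0;\xi)|^2\,d\xi ,
\end{align*}
and then letting $t\to t_0$ by dominated convergence in $\xi$. The same argument applies to $V$ with the weight $\ang{\xi}^{2s-1}$. Two ingredients are needed: pointwise convergence of the integrand for each fixed $\xi$, and an integrable dominating function independent of $t$.

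\emph{Step 1 (pointwise convergence).} Fix $\xi$. The pair $(\wh U,\wh V)(\cdot;\xi)$ solves the linear It\^o system \eqref{ito}, which has constant coefficients and deterministic initial data. Hence it has continuous paths and finite moments of all orders on $[0,T]$. For instance, a Gronwall argument combined with the Burkholder--Davis--Gundy inequality gives $\E\sup_{t\le T}(|\wh U|^4+|\wh V|^4)<\infty$. Since the paths are continuous, $\wh U(t;\xi)\to \wh U(t_0;\xi)$ almost surely. The supremum bound gives uniform integrability, so $\E|\wh U(t;\xi)-\wh U(t_0;\xi)|^2\to0$, and likewise for $\wh V$. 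One can also argue more directly from the equation: $\E|\wh U(t)-\wh U(t_0)|^2\le C(\xi)|t-t_0|$, using It\^o's isometry together with the fact that $m_1$ and $m_3$ are bounded on $[0,T]$ by Lemma \ref{lem:momentDerivation}.

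\emph{Step 2 (domination).} This is the main obstacle. The hypothesis \eqref{eq:uniformSob} bounds $\int\ang{\xi}^{2s}\E|\wh U(t;\xi)|^2d\xi$ for each $t$, but it does not by itself give a single integrable majorant of $\ang{\xi}^{2s}\E|\wh U(t;\xi)|^2$ that works for all $t$. I would obtain one from the mode-wise bound of Theorem \ref{thm:Global}: $m_1(t;\xi)\le F(t;\xi)\le C_1e^{C_2T}F(0;\xi)$ and $\ang{\xi}^{-1}m_3(t;\xi)\le C_1e^{C_2T}F(0;\xi)$. Then
\begin{align*}
\ang{\xi}^{2s}\E|\wh U(t;\xi)-\wh U(t_0;\xi)|^2\le 4C_1e^{C_2T}\ang{\xi}^{2s}F(0;\xi),
\end{align*}
and the right-hand side is integrable in $\xi$ because $\int\ang{\xi}^{2s}F(0;\xi)d\xi=\|\varphi_0\|_{H^s}^2+\|\varphi_1\|_{H^{s-1/2}}^2<\infty$. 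The same bound with $\ang{\xi}^{2s-1}$ handles $V$. If one insists on using only \eqref{eq:uniformSob} and the abstract linear structure, the fallback is as follows. The solution map of the mode system is linear in the initial data, which is deterministic. So the mode second moments are of the form $e^{tA(\xi)}m(0)$, and domination reduces to a uniform bound on the weighted propagator, which is exactly what Theorem \ref{thm:Global} and Lemma \ref{lem:boundedPatch} provide.

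\emph{Step 3 (conclusion).} Dominated convergence yields $\E\|U(t)-U(t_0)\|_{H^s_x}^2\to0$ and $\E\|V(t)-V(t_0)\|_{H^{s-1/2}_x}^2\to0$ as $t\to t_0$, which is the claimed continuity. Measurability of $(t,\xi,\omega)\mapsto\wh U$, needed for Fubini, follows from joint continuity in $t$ and measurable dependence on $\xi$ of the solution of a linear SDE with parameter.
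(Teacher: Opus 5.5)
Your proof is correct. It departs from the paper's at exactly the step you single out as the obstacle, and your route is the sound one.

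The paper dominates $\ang{\xi}^{2s}\E|\wh U(t;\xi)-\wh U(t_0;\xi)|^2$ by $4\sup_{\tau\in[0,T]}\ang{\xi}^{2s}\E|\wh U(\tau;\xi)|^2$. It declares this integrable because, by Plancherel, \eqref{eq:uniformSob} is the bound $\sup_{\tau}\int_{\R}\ang{\xi}^{2s}\E|\wh U(\tau;\xi)|^2\,d\xi<\infty$. But that controls $\sup_\tau\int$, not $\int\sup_\tau$, so the claimed integrability does not follow. A frequency bump drifting to $|\xi|\to\infty$ as $t\to t_0$ has bounded norm and vanishes pointwise in $\xi$, yet does not converge in norm. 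With time-dependent mode coefficients, such a bump is produced by a scalar linear ODE in each mode.

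You instead take the majorant from the mode-wise propagator estimate of Theorem~\ref{thm:Global}: $m_1(t;\xi)+\ang{\xi}^{-1}m_3(t;\xi)\le C_1e^{C_2T}F(0;\xi)$. The integral $\int_{\R}\ang{\xi}^{2s}F(0;\xi)\,d\xi$ is finite by \eqref{eq:uniformSob} at $t=0$. This gives a genuine $t$-independent integrable dominating function, for $U$ with weight $\ang{\xi}^{2s}$ and for $V$ with weight $\ang{\xi}^{2s-1}$.

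Your Step~1 also supplies something the paper leaves implicit. Path continuity only gives almost-sure convergence; you combine it with uniform integrability, or with your It\^o-isometry bound $\E|\wh U(t;\xi)-\wh U(t_0;\xi)|^2\le C(\xi)|t-t_0|$, to get convergence in $L^2(\Omega)$.

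The trade-off is generality. The paper's argument aims at a statement depending only on \eqref{eq:uniformSob} and linearity of the mode system, but its domination step is unjustified at that level of generality. Yours uses the specific structure of \eqref{eq:STRA} through Theorem~\ref{thm:Global}. That is exactly what Corollary~\ref{finRes} needs, and it yields a complete proof.
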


\begin{proof}
We prove the statement for $U$; the proof for $V$ is identical. Fix $t_0\in[0,T]$ and let $t$ tend to $t_0$. By Plancherel's identity in $x$,
\begin{equation}\label{eq:planch}
\E\|U(t)-U(t_0)\|_{H_x^s}^2
=
\int_{\R}\ang{\xi}^{2s}\E\big|\widehat U(t;\xi)-\widehat U(t_0;\xi)\big|^2d\xi.
\end{equation}
For each fixed $\xi$, the assumed continuity of the mode process yields
\begin{align*}
\E\big|\widehat U(t;\xi)-\widehat U(t_0;\xi)\big|^2\longrightarrow 0,
\quad\mbox{ as } t\to t_0.
\end{align*}
Moreover,
\begin{align*}
\big|\widehat U(t;\xi)-\widehat U(t_0;\xi)\big|^2
\le 2|\widehat U(t;\xi)|^2+2|\widehat U(t_0;\xi)|^2,
\end{align*}
hence
\begin{align*}
\ang{\xi}^{2s}\E\big|\widehat U(t;\xi)-\widehat U(t_0;\xi)\big|^2
\le
4\sup_{\tau\in[0,T]}\ang{\xi}^{2s}\E|\widehat U(\tau;\xi)|^2.
\end{align*}
The right-hand side is integrable in $\xi$ because \eqref{eq:uniformSob} is
equivalent by Plancherel to
\begin{align*}
\sup_{\tau\in[0,T]}\int_{\R}\ang{\xi}^{2s}\E|\widehat U(\tau;\xi)|^2d\xi<\infty.
\end{align*}
Therefore dominated convergence applies in \eqref{eq:planch} and yields
\begin{align*}
	\E\|U(t)-U(t_0)\|_{H_x^s}^2\longrightarrow 0
\quad\mbox{ as } t\to t_0,
\end{align*}
which is exactly $U\in C([0,T];L^2(\Omega;H_x^s))$.
\end{proof}

Therefore finally we have:

\begin{corollary}[Continuity $C^\infty$ well-posedness]\label{finRes}
Assume that the estimate of Lemma \ref{lem:sobolevMoment2} holds for all $s\ge0$.
Then the Cauchy problem (\ref{eq:STRA}) is well-posed in $H_x^\infty$ in the continuous 
sense:
\begin{align*}
U\in \bigcap_{k\ge0}C\big([0,T];L^2(\Omega;H_x^k)\big) ,
\qquad
V\in \bigcap_{k\ge0} C\big([0,T];L^2(\Omega;H_x^{k-\frac12})\big)
\end{align*}
\end{corollary}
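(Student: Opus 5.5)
The plan is to obtain Corollary~\ref{finRes} by chaining results already proved, one Sobolev index at a time, and then intersecting over $k$.

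First fix $k\ge 0$ and data $(\varphi_0,\varphi_1)\in H_x^k\times H_x^{k-\frac12}$. Theorem~\ref{thm:Global} gives the frequency estimate \eqref{eq:F-finalGlobal} for all $\xi\in\R$ and $t\in[0,T]$. That estimate is exactly the hypothesis \eqref{GES} of Lemma~\ref{lem:sobolevMoment2}, applied with $s=k$. The lemma then yields the uniform moment bound
\begin{align*}
\sup_{t\in[0,T]}\left[\E\|U(t)\|_{H_x^k}^2+\E\|V(t)\|_{H_x^{k-\frac12}}^2\right]\le C_1e^{C_2T}\left[\|\varphi_0\|_{H_x^k}^2+\|\varphi_1\|_{H_x^{k-\frac12}}^2\right].
\end{align*}

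Existence and uniqueness I will handle mode by mode. For each fixed $\xi$, \eqref{ito} is a linear SDE in $\C^2$ with constant coefficients, so it has a unique strong solution $(\wh U(\cdot;\xi),\wh V(\cdot;\xi))$ with continuous paths. This solution depends measurably on $\xi$, for instance via the explicit representation through $e^{i\sigma\xi B(t)}$ and a variation-of-constants formula. Inverting the Fourier transform in $x$, which the moment bound above justifies, gives an adapted $(U,V)$. Uniqueness in the stated class follows from linearity: the difference of two solutions has zero data, so Lemma~\ref{lem:momentDerivation} together with \eqref{eq:F-finalGlobal} forces $F\equiv 0$ for a.e. $\xi$, and Plancherel then gives $U=V=0$. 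Continuous dependence on the data is the same linear estimate applied to differences. Altogether this is Theorem~\ref{thm:meansquare} with $s=k$.

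For time continuity, each mode process has continuous paths, and Itô's isometry gives $\E|\wh U(t;\xi)-\wh U(t_0;\xi)|^2\to 0$ as $t\to t_0$. Together with the uniform bound \eqref{eq:uniformSob}, this verifies the hypotheses of Proposition~\ref{lem:timeCont}. Its conclusion is $U\in C([0,T];L^2(\Omega;H_x^k))$ and $V\in C([0,T];L^2(\Omega;H_x^{k-\frac12}))$. When the data lie in $H_x^\infty$, this holds for every $k$, and intersecting over $k\ge0$ gives the statement.

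The main obstacle is the passage from continuous sample paths to mean-square continuity of each mode, together with making the construction of $(U,V)$ from the modes rigorous, specifically joint measurability in $(\omega,\xi)$. For the first point I would avoid uniform integrability arguments and instead use the linear equation directly. Writing
\begin{align*}
\wh U(t)-\wh U(t_0)=\int_{t_0}^t\Bigl[\wh V-\tfrac{\sigma^2}{2}\xi^2\wh U\Bigr]dr+i\sigma\xi\int_{t_0}^t\wh U\,dB,
\end{align*}
the bounded second moments $m_1,m_3$ on $[0,T]$ make both terms $O(|t-t_0|)$ in $L^2(\Omega)$ for fixed $\xi$. The analogous estimate handles $\wh V$.
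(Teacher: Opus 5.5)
Your proposal follows the paper's route. Theorem~\ref{thm:Global} supplies the hypothesis of Lemma~\ref{lem:sobolevMoment2} with $s=k$, which yields Theorem~\ref{thm:meansquare}. Proposition~\ref{lem:timeCont} then upgrades $L^\infty$ to continuity in time, and you intersect over $k$. At one point you are more careful than the paper: continuous sample paths alone do not give $\E|\wh U(t;\xi)-\wh U(t_0;\xi)|^2\to0$, and your use of the integral equation with It\^o's isometry settles this. (It is the mean square that is $O(|t-t_0|)$; the $L^2(\Omega)$ norm of the stochastic integral is only $O(|t-t_0|^{1/2})$, which still suffices.)

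The step that does not go through as written is the domination in $\xi$, and the paper's proof of Proposition~\ref{lem:timeCont} has the same slip. Dominated convergence needs $\xi\mapsto\sup_{\tau\in[0,T]}\ang{\xi}^{2s}\E|\wh U(\tau;\xi)|^2$ to be integrable, whereas \eqref{eq:uniformSob} only controls $\sup_\tau\int$, not $\int\sup_\tau$. This is not a technicality. Take the deterministic modes $\wh U(t;\xi)=\xi^{1/2}te^{-\xi t}$ for $\xi\ge1$ (zero otherwise), with companion $\wh V:=\xi^{-1}\partial_t\wh U$. For each $\xi$ they solve a linear $2\times2$ system in $t$, and they satisfy \eqref{eq:uniformSob} with $s=0$. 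Yet $\|U(t)\|_{L^2}^2$ is a constant multiple of $\int_t^\infty\eta e^{-2\eta}\,d\eta$, which does not tend to $0=\|U(0)\|_{L^2}^2$ as $t\to0^+$. So verifying \eqref{eq:uniformSob} is not enough to invoke the proposition.

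The repair is already available to you, because Theorem~\ref{thm:Global} is pointwise in $\xi$. For every $\tau\in[0,T]$,
\[
\ang{\xi}^{2s}\E|\wh U(\tau;\xi)|^2+\ang{\xi}^{2s-1}\E|\wh V(\tau;\xi)|^2=\ang{\xi}^{2s}F(\tau;\xi)\le C_1e^{C_2T}\ang{\xi}^{2s}F(0;\xi).
\]
The right-hand side is integrable in $\xi$, with integral a multiple of $\|\varphi_0\|_{H_x^s}^2+\|\varphi_1\|_{H_x^{s-\frac12}}^2$. Use $4C_1e^{C_2T}\ang{\xi}^{2s}F(0;\xi)$ as the dominating function for both $\wh U$ and $\wh V$ in place of \eqref{eq:uniformSob}, and your argument is complete.
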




  



\bigskip
\bigskip

\noindent\textbf{Author Contributions}: All authors contributed equally.

\noindent\textbf{Data availability}: No datasets were generated or analysed during the current study.

\noindent\textbf{Conflict of interest}: The authors declare no competing interests.\\

\bibliographystyle{plain}
\bibliography{whsn_2}

\end{document}